\newcommand{\og}{\overline{g}}
\newcommand{\oh}{\overline{h}}
\newcommand{\of}{\overline{f}}
\def\U{\mathcal U}
\def\N{\mathcal N}
\def\fr{\mathrm{FillRad}}
\begin{document}

\title{
Hausdorff vs Gromov--Hausdorff distances
}

\author{Henry Adams}
\email{henry.adams@ufl.edu}
\author{Florian Frick}
\email{frick@cmu.edu}
\author{Sushovan Majhi}
\email{s.majhi@gwu.edu}
\author{Nicholas McBride}
\email{nmcbride@ucsc.edu}

\begin{abstract}
Let $M$ be a closed Riemannian manifold and let $X\subseteq M$.
If the sample $X$ is sufficiently dense relative to the curvature of $M$, then the Gromov--Hausdorff distance between $X$ and $M$ is bounded from below by half their Hausdorff distance, namely $d_\gh(X,M) \ge \frac{1}{2} d_\h(X,M)$.
The constant $\frac{1}{2}$ can be improved depending on the dimension and curvature of the manifold $M$, and obtains the optimal value $1$ in the case of the unit circle, meaning that if $X\subseteq S^1$ satisfies $d_\gh(X,S^1)<\tfrac{\pi}{6}$, then $d_\gh(X,S^1)=d_\h(X,S^1)$.
We also provide versions lower bounding the Gromov--Hausdorff distance $d_\gh(X,Y)$ between two subsets $X,Y\subseteq M$.
Our proofs convert discontinuous functions between metric spaces into simplicial maps between \v{C}ech or Vietoris--Rips complexes.
We then produce topological obstructions to the existence of certain maps using the nerve lemma and the fundamental class of the manifold, thus lower bounding the Gromov--Hausdorff distance.
\end{abstract}

\keywords{Gromov--Hausdorff distance, Hausdorff distance, Nerve lemma, \v{C}ech and Vietoris--Rips complexes}

\maketitle


\section{Introduction}
The Gromov--Hausdorff distance, denoted $d_\gh(X,Y)$, gives rise to a natural dissimilarity measure between two abstract metric spaces $X$ and $Y$ not necessarily sitting in a common ambient space~\cite{edwards1975structure,gromov1981groups, gromov1981structures}.
Despite its increasing popularity and usefulness in the theoretical and applied communities, much is yet to be discovered about the Gromov--Hausdorff distance. 

One such question, for $X$ a subset of a metric space $M$, is how $d_\gh(X,M)$ relates to the Hausdorff distance $d_\h(X,M)$. 
This paper furthers the understanding of the relationship between the Hausdorff distance and Gromov--Hausdorff distance when $M$ is a closed Riemannian manifold.
Furthermore, we provide new lower bounds on $d_\gh(X,Y)$ when $X$ and $Y$ are dense-enough subsets of a manifold~$M$.

\begin{figure}[htb]
\centering
\includegraphics[width=4in]{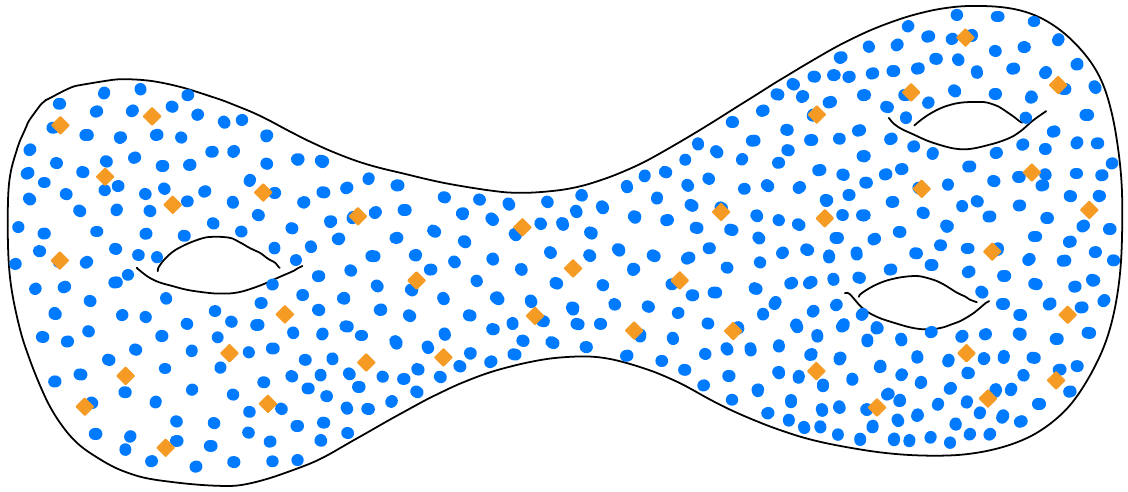}
\caption{A manifold $M$, a subset $X$ (orange diamonds), and a subset $Y$ (blue circles).}
\label{fig:two-subsets}
\end{figure}

By definition, the Gromov--Hausdorff distance between $X$ and $M$ is bounded from above by the Hausdorff distance between them: $d_\gh(X,M) \le
d_\h(X,M)$.
We show that if the sample $X$ is sufficiently dense relative to the curvature of a manifold~$M$, then the Gromov--Hausdorff distance is also bounded from below by half the Hausdorff distance: $d_\gh(X,M) \ge
\frac{1}{2} \cdot d_\h(X,M)$. 
Thus, the asymptotics of the Gromov--Hausdorff distance, whose definition involves isometric embeddings into arbitrary metric spaces, are controlled by the Hausdorff distance in a fixed space.
Furthermore, we prove that if $X$ and $Y$ are two subsets of $M$ and dense enough relative to the convexity radius of $M$, then $d_\gh(X,Y) \ge \frac{1}{2}d_\h(X,M)-d_\h(Y,M)$ (see Figure~\ref{fig:two-subsets}).

\vspace{3mm}
\begin{restatable}{theorem-main}{mainOne}
\label{thm:main1}
Let $M$ be a connected, closed Riemannian manifold with convexity radius $\rho(M)$.
Then for any $X,Y\subseteq M$, we have
\begin{enumerate}[(a)]
\item $d_\gh(X,M) \ge \min\left\{\tfrac{1}{2}d_\h(X,M), \tfrac{1}{6}\rho(M)\right\}$, and
\item $d_\gh(X,Y) \ge \min\left\{\tfrac{1}{2}d_\h(X,M)-d_\h(Y,M), \tfrac{1}{6}\rho(M)-\frac{2}{3}d_\h(Y,M)\right\}$.
\end{enumerate}
\end{restatable}
\vspace{3mm}

The interpretation of part (a) in Theorem~\ref{thm:main1} is as follows.
If $X\subseteq M$ is sufficiently dense (meaning $d_\gh(X,M)<\tfrac{1}{6}\rho(M)$), then $d_\gh(X,M) \ge \tfrac{1}{2}d_\h(X,M)$.

Part (b) above is trivially true unless $Y$ is more than twice as dense as $X$, i.e.\ unless $d_\h(Y,M)< \frac{1}{2}d_\h(X,M)$. 

Theorem~\ref{thm:main1}(b) implies an important corollary, which is motivated by the computational aspects of the lower bounds we provide for the Gromov--Hausdorff distance.
In Corollary~\ref{cor:two-subsets-eps}, we show that if $X, Y$ are dense enough subsets of $M$ with $d_\h(Y,M)\le\varepsilon$, then 
\[
d_\gh(X,Y) \ge \min\left\{\tfrac{1}{2}d_\h(X,Y)-\tfrac{3}{2}\varepsilon, \tfrac{1}{6}\rho(M)-\tfrac{2}{3}\varepsilon\right\}.
\]
This result establishes a computationally feasible lower bound for $d_\gh(X,Y)$ in terms of the Hausdorff distance $d_\h(X,Y)$ of the two subsets.
Moreover, if  $X$ and $Y$ are finite subsets with at most $n$ points, then $d_\h(X,Y)$ is computable in $\mathcal{O}(n^2)$-time.


We encourage the reader to think of Theorem~\ref{thm:main1}(a) as our main result, with a short but novel proof (see Section~\ref{sec:manifold-simple}) that is approachable even for newcomers to this research area.
Our proof uses \v{C}ech simplicial complexes in order to produce topological obstructions to the existence of functions between $X$ and $M$ of low distortion and codistortion, thus lower bounding the Gromov--Hausdorff distance.
The topology of \v{C}ech complexes appears in three main ways.
First, it allows us to turn possibly discontinuous functions between $X$ and $M$ into continuous (simplicial) maps between \v{C}ech complexes on $X$ and on $M$.
Second, our quantitative bound on $X\subseteq M$ being ``sufficiently dense'' depends on the convexity radius of the manifold $M$, which arises since the nerve lemma implies that the \v{C}ech complex of $M$ is homotopy equivalent to $M$ for scale parameters up to the convexity radius.
Third, the nerve lemma~\cite{Alexandroff1928,Borsuk1948} implies that the \v{C}ech complex of $X\subseteq M$ does not recover the fundamental class of $M$ until the scale is at least the Hausdorff distance $d_\h(X,M)$.
Our proof technique for Theorem~\ref{thm:main1}(a) is flexible: the remaining results in our paper show that various terms in part (a) can be improved by incorporating fancier tools into the same proof strategy.

The constants in Theorem~\ref{thm:main1} are not optimal, and we improve them by incorporating Vietoris--Rips complexes into the proofs.
Before showing how to improve the constants for a general manifold, we first show how to improve them in the case when $M=S^1$ is the circle.

\vspace{3mm}
\begin{restatable}{theorem-main}{mainTwo}
\label{thm:main2}    
For any subsets $X,Y\subseteq S^1$ of the circle of circumference $2\pi$, we have
\begin{enumerate}[(a)]
\item $d_\gh(X,S^1)\geq\min\left\{d_\h(X,S^1),\tfrac{\pi}{6}\right\}$, and
\item $d_\gh(X,Y)\geq\min\left\{d_\h(X,S^1)-d_\h(Y,S^1),\tfrac{\pi}{6}-\tfrac{1}{2}d_\h(Y,S^1)\right\}$.
\end{enumerate}
\end{restatable}
\vspace{3mm}

Part (a) in Theorem~\ref{thm:main2} is obtained from part (b) simply by letting $Y=S^1$.
Since the Gromov--Hausdorff distance is never larger than the Hausdorff distance, if $X\subseteq S^1$ is sufficiently dense (meaning $d_\gh(X,S^1)<\tfrac{\pi}{6}$), then we have an equality $d_\gh(X,S^1)=d_\h(X,S^1)$ between the Gromov--Hausdorff distance and the Hausdorff distance.

When applying part (b) above, one should always choose $X$ and $Y$ with $d_\h(Y,S^1)\le d_\h(X,S^1)$. 
If both $X,Y\subseteq S^1$ are sufficiently dense, i.e.\ $d_\h(Y,S^1) \le d_\h(X,S^1) \le \frac{\pi}{6}$, then the first term achieves the minimum, implying $d_\gh(X,Y) \ge |d_\h(X,S^1)-d_\h(Y,S^1)|$.

We next show how to improve the bounds in Theorem~\ref{thm:main1} for a general manifold $M$, not only for a circle.
The first term to improve is the constant in front of the convexity radius $\rho(M)$, which we improve using Gromov's filling radius~\cite{gromov1983filling,lim2020vietoris}.

\vspace{3mm}
\begin{restatable}{theorem-main}{mainThree}
\label{thm:main3}
Let $M$ be a connected, closed Riemannian manifold with filling radius $\fr(M)$.
Then for any $X,Y\subseteq M$, we have
\begin{enumerate}[(a)]
\item $d_\gh(X,M) \ge \min\left\{\tfrac{1}{2}d_\h(X,M), \tfrac{1}{2}\rho(M), \tfrac{1}{3}\fr(M)\right\}$, and
\item \ 
\small
\[\hspace{-6mm}d_\gh(X,Y) \ge \min\left\{\tfrac{1}{2}d_\h(X,M)-d_\h(Y,M), \tfrac{1}{2}\rho(M)-d_\h(Y,M), \tfrac{1}{3}\fr(M)-\tfrac{2}{3}d_\h(Y,M)\right\}.\]
\normalsize
\end{enumerate}
\end{restatable}
\vspace{3mm}

Part (a) in Theorem~\ref{thm:main3} is obtained from part (b) simply by letting $Y=M$.

Next, in Theorem~\ref{thm:main4} we show how to improve the factor of $\frac{1}{2}$ in front of the $d_\h(X,M)$ term in Theorem~\ref{thm:main1} to a constant $\alpha(n,\kappa)$ which depends only on the dimension $n$ of the manifold and on the upper bound $\kappa$ of the sectional curvatures on $M$.
These constants are equal to $\alpha(n,\kappa)=\sqrt{\tfrac{n+1}{2n}}$ if $\kappa\leq0$.
The proof of Theorem~\ref{thm:main4} enables better bounds because it incorporates not only \v{C}ech complexes but also Vietoris--Rips complexes, allowing us to incorporate Jung's theorem on manifolds~\cite{danzer1963helly,Dekster1985AnEO,Dekster1995TheJT,Dekster1997}.

\vspace{3mm}
\begin{restatable}{theorem-main}{mainFour}
\label{thm:main4}
Let $M$ be a connected, closed Riemannian $n$-manifold with convexity radius $\rho(M)$ and an upper bound $\kappa$ on all sectional curvatures at all points of $M$. 
For any subsets $X,Y\subseteq M$,
\[d_\gh(X,Y) \geq \min\left\{\alpha(n,\kappa)d_\h(X,M)-d_\h(Y,M),\frac{\alpha(n,\kappa)\tau-2d_\h(Y,M)}{2\alpha(n,\kappa)+2}\right\},\]
where $\tau=\rho(M)$ if $\kappa\le 0$ and $\tau=\min\left\{\rho(M),\tfrac{\pi}{2\sqrt{\kappa+1}}\right\}$ if $\kappa>0$.
\end{restatable}
\vspace{3mm}

These lower bounds on the Gromov--Hausdorff distance are quite different than the general setting, since in Theorem~\ref{thm:main5} we show that if $Z$ is a general compact metric space (not necessarily a manifold) and $X$ is a subset thereof (not necessarily sufficiently dense), then the ratio between $d_\gh(X,Z)$ and $d_\h(X,Z)$ can be arbitrarily far away from one.

\vspace{3mm}
\begin{restatable}{theorem-main}{mainFive}
\label{thm:main5}    
For any $\varepsilon>0$, there exists a compact metric space $Z$ and a subset $X\subseteq Z$ with \[d_\gh(X,Z) < \varepsilon \cdot d_\h(X,Z).\]
\end{restatable}
\vspace{3mm}

In general the Gromov--Hausdorff distance is NP-hard to compute or approximate~\cite{agarwal_computing_2015,Schmiedl2017}.
Though our paper is not algorithmic, it points to situations where one may be able to approximate Gromov--Hausdorff distances with the simpler Hausdorff distances.

We begin in Section~\ref{sec:background} with background on Hausdorff distances, Gromov--Hausdorff distances, correspondences, \v{C}ech complexes, the nerve lemma, the convexity radius of a manifold, and the fundamental class of a manifold.
In Section~\ref{sec:manifold-simple} we prove Theorem~\ref{thm:main1}(a), namely that $d_\gh(X,M)\geq\frac{1}{2}d_\h(X,M)$ for $X$ a sufficiently dense subset of a manifold $M$.
We introduce Vietoris--Rips complexes, which we use to show the constant $\frac{1}{2}$ can be improved to $1$ in the case of the circle (Theorem~\ref{thm:main2}), in Section~\ref{sec:circle}.
Section~\ref{sec:manifold-simple-two} is dedicated to the proof of Theorem~\ref{thm:main1}(b) for two subsets $X,Y\subseteq M$.
In Section~\ref{sec:fill-rad} we define the filling radius of a manifold to prove Theorem~\ref{thm:main3} with a relaxed density condition.
In Section~\ref{sec:Jung} we provide background on Hausmann's Theorem and Jung's Theorem,
which we use to prove our improved bound for a general manifold (Theorem~\ref{thm:main4}) with constant $\alpha(n,\kappa)$ in front of the $d_\h(X,M)$ term. 
We show in Section~\ref{sec:ratio} that for general metric space subsets the ratio between the Hausdorff and Gromov--Hausdorff distances can be arbitrarily far away from one (Theorem~\ref{thm:main5}).
In Section~\ref{sec:conclusion} we conclude with a discussion of how this project began, and a list of open questions that we hope are of interest to others.

\section{Background and notation}
\label{sec:background}

We present here the basic background material and notation used throughout the paper.
See the following textbooks for more details:~\cite{BuragoBuragoIvanov} for metric spaces and the Hausdorff and Gromov--Hausdorff distance;~\cite{Kozlov-book} for simplicial complexes and the nerve lemma;~\cite{hausmann2015mod} for $\Z/2$ homology and cohomology;~\cite{berger2007panoramic} for Riemannian manifolds and curvatures.

\subsection*{Metric spaces}
Let $(X,d)$ be a metric space.
For any $x\in X$, we let $B(x;r)=\{x'\in X~|~d(x,x') < r\}$ denote the metric ball of radius $r$ about $x$.
If needed for the sake of clarity, we will sometimes write $B_X(x;r)$ instead of $B(x;r)$.
For any $r\geq 0$ and $A\subseteq X$, we let $B(A;r)=\cup_{x\in A}B(x;r)$ denote the union of metric balls of radius $r$ centered at the points of $A$. 

The \emph{diameter} of a subset $A\subseteq X$ is the supremum of all distances between pairs of points of $A$.
More formally, the diameter is defined as
\[\diam(A)=\sup\{d(x,x') \mid x,x'\in A\}.\]
The diameter of a compact set $A$ is always finite.

\subsection*{Hausdorff distance}

Let $Z$ be a metric space.
If $X$ and $Y$ are two submetric spaces of $Z$ then the \emph{Hausdorff distance} between $X$ and $Y$ is
\[d^Z_\mathrm{H}(X,Y)=\inf\{r\geq 0\mid X\subseteq B(Y;r)\text{ and }Y\subseteq B(X;r)\}.\]
In other words, the Hausdorff distance finds the infimal real number such that if we thicken $Y$ by $r$ it contains $X$ and if we thicken $X$ by $r$ it contains $Y$; if no such $r$ exists then the Hausdorff distance is infinity.

\subsection*{Gromov--Hausdorff distance}

The Gromov--Hausdorff distance $d_{\gh}(X,Y)$ between two metric spaces $(X,d_X)$ and $(Y,d_Y)$ is defined as the infimum, over all metric spaces $Z$ and isometric embeddings $\gamma\colon X\to Z$ and $\varphi\colon Y\to Z$, of the Hausdorff distance in $Z$ between $\gamma(X)$ and $\varphi(Y)$~\cite{edwards1975structure,gromov1981groups, gromov1981structures,tuzhilin2016invented}.
To see that one can avoid taking an infimum over a proper class, one can restrict attention to the case where $Z$ is the disjoint union of $X$ and $Y$, equipped with a metric extending the metrics on $X$ and $Y$.
The Gromov--Hausdorff distance is a commonly used quantity in geometry and data analysis~\cite{GH-BU-VR,cheeger1997structure,colding1996large,gromov2007metric,harrison2023quantitative,lim2021gromov,petersen2006riemannian,tuzhilin2020lectures}.
Unlike the Hausdorff distance, the Gromov--Hausdorff distance considers sets $X$ and $Y$ that are not part of the same metric space.
However, to compute it we need to embed $X$ and $Y$ in different common metric spaces $Z$, and then take the infimum of the Hausdorff distance over all $Z$ and isometric embeddings.
Like the Hausdorff distance, it is possible for the Gromov--Hausdorff distance to be infinity.
The Gromov--Hausdorff distance indeed gives a metric on the isometry classes of compact metric spaces.

Note that for any two metric spaces $X$ and $Y$ contained in some common metric space $Z$, we always have
\begin{equation}
\label{eq:dgh-le-dh}
d_\gh(X,Y) \le d_\h^Z(X,Y).
\end{equation}

\subsection*{Correspondences and distortion}

A relation $R\subseteq X\times Y$ is a \emph{correspondence} if the following two conditions hold:
\begin{itemize}
\item For every $x \in X$, there exists some $y \in Y$ such that $(x,y) \in R$, and
\item For every $y \in Y$, there exists some $x \in X$ such that $(x,y) \in R$.
\end{itemize}
So every point in X is related to at least one point in Y, and vice-versa.
If $C$ is a correspondence between $(X,d_X)$ and $(Y,d_Y)$, then the \emph{distortion} of $C$ is:
\[\dis(C)=\sup_{(x,y)\in C,(x',y')\in C} \left|d_{X}(x,x')-d_{Y}(y,y')\right|.\]
We can equivalently define the \emph{Gromov--Hausdorff} distance between two metric spaces $X$ and $Y$ to be
\[d_{\gh}(X,Y) = \tfrac{1}{2} \inf_{C \subseteq X \times Y}\dis(C),\]
where the infimum is taken over all correspondences between $X$ and $Y$~\cite{BuragoBuragoIvanov,kalton1999distances}.

If $X$ and $Y$ are both compact metric spaces, then the infimum in the definition of the Gromov--Hausdorf distance is attained~\cite{ivanov2016realizations}.

\subsection*{Simplicial complexes.}

For notational convenience, we identify simplicial complexes with their geometric realizations.
A simplicial map $f\colon K\to L$ is continuous on geometric realizations.
If $K$ and $L$ are simplicial complexes and $f, g\colon K\to L$ are simplicial maps, then $f$ and $g$ are said to be \emph{contiguous} if for every simplex $\sigma\in K$, the union $f(\sigma)\cup g(\sigma)$ is contained in some simplex in $L$.
Contiguous simplicial maps induce homotopic maps on their geometric realizations.

\begin{figure}[htb]
\centering
\includegraphics[width=\textwidth]{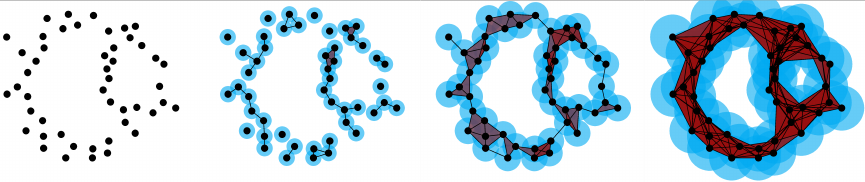}
\caption{A finite set $X$ of points in the plane, along with three \v{C}ech complexes $\cech{X}{r}$ with increasing values of $r>0$.}
\label{fig:cech}
\end{figure}

For $Z$ a metric space, $X\subseteq Z$, and $r>0$, the \v{C}ech complex $\cech{X}{r}$ is defined as the simplicial complex having $X$ as its vertex set,
and a finite subset $[x_0,\ldots,x_k]\subseteq X$ as a simplex if there is some $z\in Z$ with $d(z,x_i)<r$ for all $0\leq i\leq k$, i.e.\ if $\cap_{i=0}^k B_Z(x_i;r)\neq \emptyset$.
See Figure~\ref{fig:cech} for an example.
We emphasize that this is the \emph{ambient} \v{C}ech complex, since we are looking at intersections of balls in $Z$.
The case when $Z=X$ is often referred to as the \emph{intrinsic} \v{C}ech complex, but this case is not used in our paper.

\subsection*{Discontinuous functions induce simplicial maps}

\v{C}ech complexes are related to the theory of Gromov--Hausdorff distances because they allow us to turn discontinuous functions between metric spaces into continuous maps between simplicial complexes.
Indeed, we refer the reader to~\cite{ChazalDeSilvaOudot2014} for more information on the following lemma.

\vspace{3mm}
\begin{lemma}[Correspondences and ambient \v{C}ech complexes~\cite{ChazalDeSilvaOudot2014}]
\label{lem:correspondence-cech}
Let $Z$ be a metric space, let $X\subseteq Z$, and let $r > 2\cdot d_\gh(X,Z)$.
Then for any $\varepsilon > 0$, there exist simplicial maps
\[\cech{Z}{\varepsilon} \xlongrightarrow{\oh} \cech{X}{r+\varepsilon}
\xlongrightarrow{\og} \cech{Z}{3r+2\varepsilon}\]
such that the composition
$\og\circ\oh$ is contiguous to the inclusion
$\iota:\cech{Z}{\varepsilon}\hookrightarrow\cech{Z}{3r+2\varepsilon}$, where all complexes are ambient \v{C}ech complexes using balls in $Z$.
\end{lemma}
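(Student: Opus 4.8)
The plan is to produce the two simplicial maps from a near-optimal correspondence between $X$ and $Z$, and then verify contiguity by a careful bookkeeping of radii. First I would fix $r > 2 d_\gh(X,Z)$ and choose a correspondence $C \subseteq X \times Z$ with $\dis(C) < r$; such a $C$ exists since $2 d_\gh(X,Z) = \inf_C \dis(C)$ and $r$ strictly exceeds this infimum. From $C$ I extract two (not necessarily continuous, not necessarily well-defined-by-a-formula) choice functions: a map $h \colon Z \to X$ with $(h(z),z) \in C$ for all $z$, and a map $g \colon X \to Z$ with $(x, g(x)) \in C$ for all $x$; these exist by the two defining properties of a correspondence (every point on each side is matched). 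These are the vertex maps; the content is to check they send simplices to simplices.

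The key estimates are the following. Because $(h(z),z)\in C$ and $\dis(C)<r$, for any $z,z' \in Z$ we have $|d_X(h(z),h(z')) - d_Z(z,z')| < r$, and similarly $|d_Z(g(x),g(x')) - d_X(x,x')| < r$ for $x,x'\in X$; also for a matched pair, comparing $(x,g(x))$ with $(h(z),z)$ gives $|d_Z(g(x),z) - d_X(x,h(z))| < r$. Now suppose $[z_0,\dots,z_k]$ is a simplex of $\cech{Z}{\varepsilon}$, so there is $w \in Z$ with $d_Z(w,z_i) < \varepsilon$ for all $i$. I want $[h(z_0),\dots,h(z_k)]$ to be a simplex of $\cech{X}{r+\varepsilon}$: take the witness point to be $h(w) \in X$, and use $d_X(h(w),h(z_i)) < d_Z(w,z_i) + r < \varepsilon + r$ from the distortion bound applied to the pair $(h(w),w),(h(z_i),z_i)$. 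This shows $\oh := h_*$ is simplicial $\cech{Z}{\varepsilon} \to \cech{X}{r+\varepsilon}$. Similarly, if $[x_0,\dots,x_k]$ is a simplex of $\cech{X}{r+\varepsilon}$ with witness $u \in X$ so $d_X(u,x_i) < r+\varepsilon$, then $g(u) \in Z$ witnesses $d_Z(g(u),g(x_i)) < d_X(u,x_i) + r < 2r+\varepsilon$, so $\og := g_*$ is simplicial $\cech{X}{r+\varepsilon} \to \cech{Z}{2r+\varepsilon}$.

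It remains to show $\og \circ \oh$ is contiguous to the inclusion $\iota$. Let $\sigma = [z_0,\dots,z_k]$ be a simplex of $\cech{Z}{\varepsilon}$ with witness $w$, $d_Z(w,z_i)<\varepsilon$. I must show $\{z_0,\dots,z_k\} \cup \{g(h(z_0)),\dots,g(h(z_k))\}$ lies in a common simplex of $\cech{Z}{2r+\varepsilon}$; I claim $g(h(w))$ is a common witness. For the $z_i$: $d_Z(g(h(w)),z_i) \le d_Z(g(h(w)),g(h(z_i))) + d_Z(g(h(z_i)),z_i)$. The first term is $< d_X(h(w),h(z_i)) + r < (d_Z(w,z_i)+r) + r < \varepsilon + 2r$ by two applications of the distortion bound — so already this term alone could be too large, which is why one must be more careful. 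The honest estimate is to bound $d_Z(g(h(w)),z_i)$ directly: using the matched pairs $(h(w),g(h(w)))$ and $(h(z_i),z_i)$ wait — $z_i$ need not equal $g(h(z_i))$, but $(h(z_i),z_i)\in C$, so comparing $(h(w),g(h(w)))\in C$ with $(h(z_i),z_i)\in C$ gives $|d_Z(g(h(w)),z_i) - d_X(h(w),h(z_i))| < r$, hence $d_Z(g(h(w)),z_i) < d_X(h(w),h(z_i)) + r < d_Z(w,z_i) + 2r < \varepsilon + 2r$. For the vertices $g(h(z_i))$: comparing $(h(w),g(h(w)))$ with $(h(z_i),g(h(z_i)))$ gives $d_Z(g(h(w)),g(h(z_i))) < d_X(h(w),h(z_i)) + r < \varepsilon + 2r$. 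So $g(h(w))$ is within $2r+\varepsilon$ of every vertex of $f(\sigma)\cup\iota(\sigma)$, proving contiguity. The main obstacle is precisely this last radius audit: one is tempted to bound $d_Z(g(h(w)),z_i)$ by chaining through $g(h(z_i))$, which loses a factor and overshoots $2r+\varepsilon$; the fix is to always compare elements of $C$ pairwise rather than composing triangle inequalities, keeping the number of distortion-applications to exactly one per comparison plus one triangle inequality against the $\varepsilon$-witness.
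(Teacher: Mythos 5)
Your proof is correct and follows essentially the same approach as the paper: extract choice functions $h\colon Z\to X$ and $g\colon X\to Z$ from a correspondence with distortion $<r$, check they define simplicial maps via a single distortion estimate against the witness point, and prove contiguity by exhibiting a common witness ball. The one small deviation is that for contiguity you take $g(h(w))$ as the witness and must therefore run two separate distortion estimates (against the $z_i$ and against the $g(h(z_i))$), whereas the paper keeps the original $\varepsilon$-witness $w$ itself, which is marginally cleaner since $d_Z(w,z_i)<\varepsilon<2r+\varepsilon$ is already known and only $d_Z\bigl(w, g(h(z_i))\bigr)<2r+\varepsilon$ needs checking.
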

\vspace{3mm}

\begin{proof}
Since $2 \cdot d_\gh(X,Z)<r$, there exists a correspondence $C\subseteq X\times Z$ such that $\dis(C)<r$.
We can define (possibly discontinuous) functions $h\colon Z\to X$ and $g\colon X\to Z$ such that $(h(z),z)\in C$ for any $z\in Z$ and $(x,g(x))\in C$ for any $x\in X$.
    
We first show that the map $h$ extends to a simplicial map
$\oh\colon\cech{Z}{\varepsilon}\to \cech{X}{r+\varepsilon}$ on ambient \v{C}ech complexes defined via $\oh([z_0,\ldots,z_k])=[h(z_0),\ldots,h(z_k)]$.
Indeed, $[z_0,\ldots,z_k]\in\cech{Z}{\varepsilon}$ implies that there exists some $z\in Z$ with $d_Z(z,z_i)<\varepsilon$ for $0\leq i\leq k$.
Since $(h(z),z),(h(z_i),z_i)\in C$, 
\[
    d_X(h(z), h(z_i))\leq d_Z(z,z_i)+r<r+\varepsilon.
\]
So, $d_X(h(z),h(z_i))<r+\varepsilon$ for all~$i$, meaning $\oh([z_0,\dots,z_k]) \in \cech{X}{r+\varepsilon}$.
So $\oh$ is a simplicial map.

We now show that $g$ also extends to a simplicial map $\og \colon \cech{X}{r+\varepsilon}\to\cech{Z}{3r+2\varepsilon}$.
Note $[x_0,\ldots,x_k]\in\cech{X}{r+\varepsilon}$ implies that there exists some $z\in Z$ (since we are using ambient \v{C}ech complexes in $Z$) with $d_Z(z,x_i)<r+\varepsilon$ for $0\leq i\leq k$.
From the triangle inequality, for any $0\leq i\leq k$ we get
\[d_Z(x_0,x_i)\leq d_Z(x_0,z) +d_Z(z,x_i)<(r+\varepsilon)+(r+\varepsilon)=2r+2\varepsilon.\]

Since $(x_0,g(x_0)),(x_i,g(x_i))\in C$, we therefore get
\[d_Z(g(x_0), g(x_i))\leq d_X(x_0,x_i)+r<(2r+2\varepsilon)+r=3r+2\varepsilon.\]
So, $d_Z(g(x_0),g(x_i))<3r+2\varepsilon$ for all~$i$, meaning $\og([x_0,\dots,x_k]) \in \cech{Z}{3r+2\varepsilon}$.

In order to show that the simplicial maps $\og\circ\oh$ and $\iota$ are contiguous, let $\sigma=[z_0,\ldots,z_k]\in\cech{Z}{\varepsilon}$.
So there exists $z\in Z$ with $d_Z(z,z_i)<\varepsilon$ for all $i$.
Since $(h(z),z),(h(z_i),g(h(z_i)))\in C$, by the definition of distortion we have
\[
    d_Z(z,g(h(z_i)))\leq d_X(h(z),h(z_i))+r
    \leq d_Z(z,z_i)+2r < 2r+\varepsilon\leq3r+2\varepsilon.
\]
So $B_Z(z;3r+2\varepsilon)$ contains $z_i$ and $g(h(z_i))$ for all $i$, meaning $(\og\circ\oh)(\sigma)\cup\iota(\sigma)\in\cech{Z}{3r+2\varepsilon}$.
\end{proof}

\subsection*{Riemannian manifolds}
Let $M$ be a Riemannian manifold without boundary.
A subset $A\subseteq M$ is called \emph{geodesically convex} if for any two points $p,q\in A$, there exists a unique minimizing geodesic segment from $p$ to $q$, and furthermore this geodesic is entirely in $A$.

\vspace{3mm}
\begin{definition}[Convexity radius]
\label{def:conv-rad}
The \emph{convexity radius} of $M$, denoted $\rho(M)$, is the supremum of the set of radii so that all smaller balls are geodesically convex.
Formally,
\[
\rho(M)=\sup\;\{r\geq0\mid B(p;s)
\text{ is geodesically convex for all }0<s<r\text{ and }p\in M\}.	
\]
\end{definition}

If $M$ is compact, the convexity radius $\rho(M)$ is positive; see~\cite[Proposition~95]{berger2007panoramic}.
For example, the unit sphere $S^n$ has
convexity radius $\rho(S^n)=\tfrac{\pi}{2}$.

The main theorems in this paper will be for a connected, closed Riemannian manifold $M$, where \emph{closed} means compact and without boundary.

\subsection*{Nerve lemma}

An open cover $\U=\{U_i\}_{i\in I}$ of a topological space $Z$ is a \emph{good cover} if each set is contractible and all finite intersections of its elements are empty or contractible.
The \emph{nerve} of~$\U$, denoted $\N(\U)$, is defined to be the simplicial
complex having $I$ as its vertex set, and a simplex $\{i_0,\ldots,i_k\}$ when the finite intersection $U_{i_0}\cap\ldots\cap U_{i_k}$ is nonempty (see Figure~\ref{fig:nerve}).
Under the right assumptions,
the nerve preserves the homotopy~type of the union $Z$, as stated by the
following foundational result.

\begin{figure}[htb]
\centering
\includegraphics[width=5in]{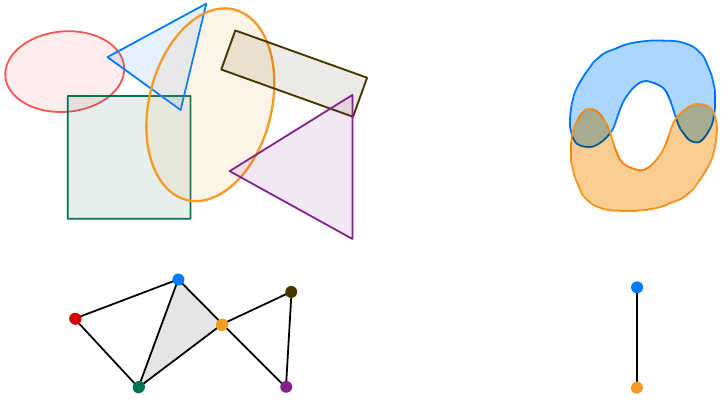}
\caption{\emph{(Left)} A good cover of a topological space and its nerve.
\emph{(Right)} A cover of a topological space that is not good, along with its nerve.}
\label{fig:nerve}
\end{figure}

\vspace{3mm}
\begin{lemma}[Nerve lemma~\cite{Alexandroff1928,Borsuk1948,Hatcher}]
\label{lem:nerve}
If $\U=\{U_i\}_{i\in I}$ is a good open cover of a paracompact topological space $Z$, then we have a homotopy equivalence $\N(\U)\simeq Z$.

Furthermore, if $\U'=\{U'_i\}_{i\in I}$ is another good open cover of the paracompact space $Z$ with $U_i\subseteq U'_i$ for all $i\in I$, then the natural inclusion $\iota\colon \N(\U)\hookrightarrow\N(\U')$ is a homotopy equivalence.
\end{lemma}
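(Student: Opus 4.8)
The plan is to compare both $Z$ and $|\N(\U)|$ to a single auxiliary space, the \emph{Mayer--Vietoris blowup} of the cover, and then to read off the naturality statement from the functoriality of that construction. For a simplex $\sigma=\{i_0,\dots,i_k\}\in\N(\U)$ write $U_\sigma=U_{i_0}\cap\cdots\cap U_{i_k}$, and let $|\sigma|$ denote the corresponding closed geometric simplex inside $|\N(\U)|$. Define
\[
\Delta\U=\bigcup_{\sigma\in\N(\U)}U_\sigma\times|\sigma|\subseteq Z\times|\N(\U)|,
\]
with coordinate projections $p\colon\Delta\U\to Z$ and $q\colon\Delta\U\to|\N(\U)|$. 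I would show that both $p$ and $q$ are homotopy equivalences; composing a homotopy inverse of one with the other then yields the desired equivalence $|\N(\U)|\simeq Z$, and the refinement square takes care of the rest.

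First, $p$ is a homotopy equivalence, and this step uses only paracompactness of $Z$. Fix a partition of unity $\{\phi_i\}_{i\in I}$ subordinate to $\U$ and define $s\colon Z\to\Delta\U$ by $s(z)=\bigl(z,\sum_i\phi_i(z)\,v_i\bigr)$, where $v_i$ is the vertex of $|\N(\U)|$ indexed by $i$; this is well defined because the finitely many indices $i$ with $\phi_i(z)\neq 0$ satisfy $z\in U_i$ and hence span a simplex of $\N(\U)$. Then $p\circ s=\mathrm{id}_Z$. Conversely, given $(z,t)\in\Delta\U$, the carriers of $t$ and of the simplex coordinate of $s(z)$ together span a simplex $\sigma\in\N(\U)$ with $z\in U_\sigma$, so both $(z,t)$ and $s(z)$ lie in the convex set $\{z\}\times|\sigma|\subseteq\Delta\U$; the fiberwise straight-line homotopy is thus a homotopy $s\circ p\simeq\mathrm{id}_{\Delta\U}$ inside $\Delta\U$.

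Second, $q$ is a homotopy equivalence, and this is where goodness of the cover is consumed. Filter $|\N(\U)|$ by skeleta and set $\Delta_k=q^{-1}\bigl(|\N(\U)|^{(k)}\bigr)$, so that $q$ restricts to maps $q_k\colon\Delta_k\to|\N(\U)|^{(k)}$. I would prove by induction on $k$ that every $q_k$ is a homotopy equivalence. The inductive step compares two pushouts: $|\N(\U)|^{(k)}$ is obtained from $|\N(\U)|^{(k-1)}$ by attaching, over each $k$-simplex $\sigma$, the cell $|\sigma|$ along $\partial|\sigma|$, while $\Delta_k$ is obtained from $\Delta_{k-1}$ by gluing in the pieces $U_\sigma\times|\sigma|$ along $U_\sigma\times\partial|\sigma|\cup\bigcup_{\tau\subsetneq\sigma}U_\tau\times|\tau|$. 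Since every $U_\sigma$ is empty or contractible, the projection $U_\sigma\times|\sigma|\to|\sigma|$ is a homotopy equivalence compatible with these gluing data, and a gluing lemma for homotopy equivalences along closed cofibrations promotes $q_{k-1}$ to $q_k$. Passing to the colimit over $k$ shows $q$ is a homotopy equivalence. (When $I$ is infinite the same skeletal argument applies; alternatively, since the uses of this lemma in the paper involve compact $Z$, one may pass to finite subcovers.)

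Finally, for the ``furthermore'' clause, let $\U'=\{U'_i\}_{i\in I}$ be a good cover with $U_i\subseteq U'_i$ for all $i$. Then $U_\sigma\subseteq U'_\sigma$ for every $\sigma$ and $|\N(\U)|\subseteq|\N(\U')|$, so inclusion induces a map $j\colon\Delta\U\to\Delta\U'$ with $p'\circ j=p$ and $q'\circ j=\iota\circ q$, where $\iota\colon|\N(\U)|\hookrightarrow|\N(\U')|$. Since $p$ and $p'$ are homotopy equivalences, the first relation forces $j$ to be a homotopy equivalence; since $q$ and $q'$ are homotopy equivalences, the second then forces $\iota$ to be a homotopy equivalence. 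The main obstacle in this program is the inductive gluing argument of the third paragraph: making precise that the cell-by-cell projection $U_\sigma\times|\sigma|\to|\sigma|$ assembles into a global homotopy equivalence $\Delta\U\to|\N(\U)|$ is exactly the point where contractibility of \emph{all} finite intersections is needed, and it requires care with the cofibration hypotheses in the gluing lemma (equivalently, one may phrase this step as identifying $\Delta\U$ with the homotopy colimit of the diagram of intersections of $\U$).
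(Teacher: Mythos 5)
Your proof is correct, and it takes a more self-contained (if somewhat heavier) route than the paper. The paper cites the first clause of the lemma to the literature and only sketches the ``furthermore'' part: a locally finite partition of unity subordinate to $\U$ is automatically subordinate to the coarser cover $\U'$, so the two standard maps $h\colon Z\to\N(\U)$ and $h'\colon Z\to\N(\U')$ satisfy $\iota\circ h=h'$, and since $h,h'$ are homotopy equivalences, two-out-of-three gives the result. You instead build the Mayer--Vietoris blowup $\Delta\U$, prove both clauses from scratch (paracompactness makes $p$ an equivalence, goodness makes $q$ an equivalence via the skeletal induction and gluing lemma), and deduce functoriality from the naturality of the blowup under refinement. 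The two arguments are really the same one seen at different levels of detail: your section $s$ is precisely the paper's partition-of-unity map, and the relation $q'\circ j\circ s=\iota\circ q\circ s$ recovers the paper's commutative triangle, so your ``refinement square'' is a structured repackaging of ``use the same partition of unity.'' What your version buys is a complete, reference-free proof; what it costs is the cell-by-cell gluing argument you flag at the end, plus a few routine but nontrivial continuity checks (e.g.\ joint continuity of the fiberwise straight-line homotopy, and that the colimit over skeleta of a possibly infinite-dimensional nerve preserves homotopy equivalences), which you correctly identify as the places where care is needed.
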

\vspace{3mm}

We refer to this latter statement as a \emph{functorial} nerve lemma.
It is implied by the standard proof of the nerve lemma as follows.
When defining the map from $Z\to\N(\U')$, use the same partition of unity subordinate to $\U$ used to obtain the map $Z\to\N(\U)$.
This produces the commutative diagram below, which implies that the natural inclusion $\iota$ is a homotopy equivalence.
See for example~\cite[Lemma 3.7]{fasy2022geodesic}.
\begin{equation*}
\begin{tikzpicture} [baseline=(current  bounding  box.center)]
\centering
\node (k1) at (-2,0) {$\N(\U)$};
\node (k2) at (2,0) {$\N(\U')$};
\node (k3) at (0,-1.5) {$Z$};
\draw[rinclusion] (k1) to node[auto] {$\iota$} (k2);
\draw[map] (k3) to node[auto] {$h$} (k1);
\draw[map] (k3) to node[auto,swap] {$h'$} (k2);
\end{tikzpicture}        
\end{equation*}
See~\cite{bauer2023nerve} for further variants of the functorial nerve lemma, for example in the setting when the cover need not be good.

We note that if $X$ is a metric space and $A\subseteq X$, then the \v{C}ech complex is an example of a nerve complex, namely $\cech{A}{r}=\N(\{B_X(a;r)~|~a\in A\})$.

\vspace{3mm}
\begin{corollary}
\label{cor:cech-nerve}
Let $M$ be a Riemannian manifold.
Then $\cech{M}{r} \simeq M$ for all $r<\rho(M)$.
Furthermore, if $0<r\leq r'<\rho(M)$, then the natural inclusion $\iota\colon \cech{M}{r}\hookrightarrow\cech{M}{r'}$ is a homotopy equivalence.
\end{corollary}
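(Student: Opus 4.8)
The plan is to apply the functorial nerve lemma (Lemma~\ref{lem:nerve}) to the cover of $M$ by metric balls of radius $r$, and to the cover by metric balls of radius $r'$. First I would observe that for $r < \rho(M)$, every ball $B_M(p;r)$ is geodesically convex by Definition~\ref{def:conv-rad}; a geodesically convex ball in a Riemannian manifold is contractible (it deformation retracts onto its center along the unique minimizing geodesics), and moreover any finite intersection of geodesically convex sets is again geodesically convex, hence empty or contractible. Therefore $\U = \{B_M(p;r) \mid p \in M\}$ is a good open cover of $M$. Since $M$ is a manifold it is paracompact, so the nerve lemma applies and yields $\cech{M}{r} = \N(\U) \simeq M$, using the remark that the \v{C}ech complex is the nerve of this ball cover.

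For the second statement, given $0 < r \le r' < \rho(M)$, I would set $\U = \{B_M(p;r) \mid p\in M\}$ and $\U' = \{B_M(p;r') \mid p \in M\}$, both indexed by the same set $M$. Both are good covers by the argument above (using $r' < \rho(M)$ for $\U'$), and $B_M(p;r) \subseteq B_M(p;r')$ for every $p$. The functorial part of Lemma~\ref{lem:nerve} then immediately gives that the inclusion $\iota\colon \N(\U) \hookrightarrow \N(\U')$, which is exactly the inclusion $\cech{M}{r} \hookrightarrow \cech{M}{r'}$, is a homotopy equivalence.

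The only genuine content beyond directly quoting the nerve lemma is verifying that geodesically convex balls and their finite intersections are contractible, i.e.\ that the ball cover is actually \emph{good}. I expect this to be the main (though still routine) obstacle: one should note that a geodesically convex set $A$ is star-shaped with respect to any of its points via the geodesic contraction $(x,t) \mapsto \gamma_x(t)$, where $\gamma_x$ is the unique minimizing geodesic from the center to $x$, and that this map is continuous in both variables by smooth dependence of geodesics on endpoints; since an intersection of geodesically convex sets is geodesically convex, the same argument handles all finite intersections. Strictly speaking one also wants the covers to be by \emph{open} sets, which holds since metric balls $B(p;r)$ are open. With these observations in hand, both claims follow directly from Lemma~\ref{lem:nerve}.
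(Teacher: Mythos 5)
Your proof is correct and follows essentially the same route as the paper's: verify that the cover by geodesically convex balls is good, cite paracompactness, and apply the (functorial) nerve lemma. The only difference is that you spell out why a geodesically convex set is contractible via the geodesic contraction, a detail the paper leaves implicit.
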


\begin{proof}
Since $r < \rho(M)$, the ball $B(m;r)$ is geodesically convex for all $m\in M$.
Furthermore, the open cover $\{B(m;r)\}_{m\in M}$ of $M$ is good, since an intersection of geodesically convex sets is geodesically convex, and since a nonempty geodesically convex set is contractible.
And $M$ is paracompact since it is a metric space.
Therefore, the Nerve Lemma (Lemma~\ref{lem:nerve}) implies $\cech{M}{r}\simeq M$.

Lemma~\ref{lem:nerve} also implies that the inclusion map is a homotopy equivalence.
\end{proof}

We note the cover $\{B(m;r)\}_{m\in M}$ of $M$ is not locally finite.
Nevertheless, since $M$ is paracompact, a (locally finite) partition of unity exists subordinate to this open cover.
This idea is explained in~\cite[Theorem~13.1.3]{tom2008algebraic}, and used in the proof of~\cite[Corollary~4G.3]{Hatcher}.

\subsection*{The fundamental class of a manifold}
In this paper, we always take homology with $\Z/2$-coefficients and manifolds
without boundary. 
Let $M$ be a connected (not necessarily compact) $n$--dimensional manifold without boundary. 
The $n$-dimensional homology $H_n(M;\Z/2)$ of $M$ is well-understood and is dictated by the compactness of $M$.
More specifically, 
\begin{equation}\label{eq:fund-cls}
    H_n(M;\Z/2)\simeq\begin{cases} 
    \Z/2&\text{ if }M\text{ is compact} \\
    0&\text{ if }M\text{ is not compact}.
    \end{cases}
\end{equation}
To see this, we observe from the Poincar\'e duality that 
\[
H_n(M;\Z/2)\simeq H^0_c(M;\Z/2),
\]
where $H^0_c(M)$ denotes the $0$-dimensional cohomology group with \emph{compact support}. 
If $M$ is compact, cohomology with compact support matches the usual cohomology, and so $H^0_c(M,\Z/2)=\Z/2$. 
The \emph{fundamental class} over $\Z/2$ of a compact, connected $n$--manifold $M$ is the (non-trivial) generator $[M]$ of the homology group $H_n(M,\Z/2)$; see~\cite{hausmann2015mod}. 
In the case when $M$ is not compact, one can see that a $0$-cocycle can not be supported on a compact subset of $M$, i.e., $H^0_c(M,\Z/2)=0$. 

\vspace{3mm}
\begin{lemma}
\label{lem:proper-subset}
Let $M$ be a connected closed $n$-manifold.
If $M'$ is an open proper subset of $M$, then $H_n(M') \cong 0$. 
\end{lemma}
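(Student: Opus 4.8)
The plan is to exploit that $M'$ is a \emph{proper} open subset of the connected closed $n$-manifold $M$, so $M'$ is itself a (not necessarily connected, and certainly not compact) $n$-manifold without boundary. The strategy is to reduce the statement to the dichotomy in \eqref{eq:fund-cls}: for a connected $n$-manifold without boundary, $H_n(\,\cdot\,;\Z/2)$ vanishes precisely when the manifold fails to be compact. So the whole proof amounts to arguing that no connected component of $M'$ is compact, and then assembling $H_n(M')$ from the homology of its components.

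First I would observe that $M'$, being open in $M$, is an $n$-manifold without boundary, and write $M' = \bigsqcup_{j} M'_j$ as the disjoint union of its connected components, each $M'_j$ an open subset of $M$ and hence again a connected $n$-manifold without boundary. Since homology commutes with disjoint unions (equivalently, $H_n$ distributes over a coproduct of path components), it suffices to show $H_n(M'_j;\Z/2) = 0$ for every $j$; by \eqref{eq:fund-cls} applied to each component $M'_j$, this is equivalent to showing that no $M'_j$ is compact. The second step, then, is the topological heart: I would show that if some component $M'_j$ were compact, it would be a nonempty open and closed subset of the connected space $M$ — open because it is a component of the open set $M'$ (and components of an open subset of a locally connected, indeed locally Euclidean, space are open), and closed in $M$ because a compact subset of the Hausdorff space $M$ is closed in $M$. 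By connectedness of $M$ this forces $M'_j = M$, contradicting that $M' \subsetneq M$ is a proper subset.

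The one place to be slightly careful — and the main (though mild) obstacle — is justifying that the connected components of the open set $M'$ are themselves open in $M$; this uses only that $M$ is locally connected (immediate since it is locally Euclidean), so components of open sets are open, and hence each $M'_j$ is open in $M'$ and therefore open in $M$. With that in hand, the ``open-and-closed'' argument goes through verbatim. I should also note that the case $M' = \emptyset$ is trivial ($H_n = 0$), and the case of infinitely many components is handled uniformly by the disjoint-union formula for homology, so no finiteness hypothesis on the number of components is needed.

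Finally, I would remark that an alternative, perhaps even cleaner, route avoids treating components individually: one invokes Poincaré duality with compact supports directly, $H_n(M';\Z/2) \cong H^0_c(M';\Z/2)$, exactly as in the discussion preceding the lemma. A nonzero class in $H^0_c(M';\Z/2)$ would be a locally constant $\Z/2$-valued function on $M'$ with compact support, nonvanishing on some component $M'_j$; compactness of the support forces $M'_j$ compact, and one is back to the open-and-closed contradiction against connectedness of $M$. Either formulation is short; I would present the component-wise version since it reuses \eqref{eq:fund-cls} as a black box and keeps the sheaf-cohomological bookkeeping to a minimum.
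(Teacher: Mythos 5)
Your proof is essentially the same as the paper's: both reduce to the dichotomy in \eqref{eq:fund-cls} and derive non-compactness from the observation that a nonempty compact open subset of the connected space $M$ would be clopen and hence all of $M$. You are in fact slightly more careful than the paper, which applies this reasoning and \eqref{eq:fund-cls} to $M'$ directly even though $M'$ need not be connected; your component-wise decomposition (using that components of an open subset of a locally connected space are open, and that homology commutes with disjoint unions) is the rigorous way to handle a possibly disconnected $M'$.
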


\begin{proof}
The lemma is clearly true if $M'$ is empty, so let $M'\neq\emptyset$.
The space $M'$ is an $n$-manifold since it is an open subset of $M$.
We further note that $M'$ is not compact, since then it would be a nonempty proper subset that is both open and closed in $M$, contradicting the fact that $M$ is connected.
The result then follows from \eqref{eq:fund-cls}.
\end{proof}

\section{A bound in manifolds, with a non-optimal constant}
\label{sec:manifold-simple}

We recall the statement of Theorem~\ref{thm:main1}.

\vspace{3mm}
\mainOne*
\vspace{3mm}

The purpose of this section is to give the proof of part (a).
We defer the proof of part (b) until Section~\ref{sec:manifold-simple-two}.

\begin{proof}[Proof of Theorem~\ref{thm:main1}(a)]
We will show that if $d_\gh(X,M) < \frac{1}{6}\rho(M)$, then $d_\gh(X,M) \ge \frac{1}{2}d_\h(X,M)$.

Fix $r$ such that $2\cdot d_\gh(X,M)<r
<\frac{1}{3}\rho(M)$, and fix $\varepsilon>0$ so that $3r+2\varepsilon<\rho(M)$.
Every \v{C}ech complex in this proof is an ambient \v{C}ech complex with balls in $M$.
By Lemma~\ref{lem:correspondence-cech} we get simplicial maps
\[
\cech{M}{\varepsilon} 
\xlongrightarrow{\oh} \cech{X}{r+\varepsilon}
\xlongrightarrow{\og} \cech{M}{3r+2\varepsilon}
\]    
whose composition $\og \circ \oh$ is homotopic to the inclusion
$\iota\colon \cech{M}{\varepsilon} \hookrightarrow \cech{M}{3r+2\varepsilon}$.
The two collections of balls $\{B(m;\varepsilon)\}_{m\in M}$ and $\{B(m;3r+2\varepsilon)\}_{m\in M}$ each form good open covers of $M$, since $0<\varepsilon<3r+2\varepsilon<\rho(M)$.
The nerve lemma (Corollary~\ref{cor:cech-nerve}) therefore implies that $\og \circ \oh$ is a homotopy equivalence between spaces homotopy equivalent to $M$.
Hence $\og_*\circ\oh_*$ preserves the fundamental class, i.e.\ $\og_*\circ\oh_*$ is an isomorphism on $n$-dimensional homology with $\Z/2$ coefficients, where $n$ is the dimension of the manifold $M$:
\[\Z/2 \cong H_n(\cech{M}{\varepsilon}) \xrightarrow{\oh_*} H_n(\cech{X}{r+\varepsilon}) \xrightarrow{\og_*} H_n(\cech{M}{3r+2\varepsilon}) \cong \Z/2.\]

\begin{figure}[htb]
\centering
\includegraphics[width=3in]{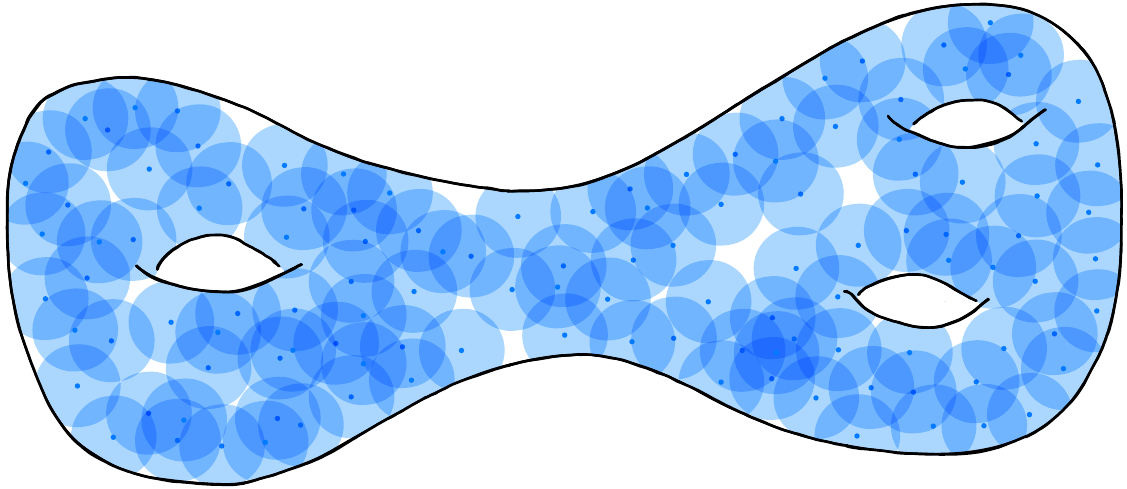}
\caption{Picture of the balls determining $\cech{X}{r+\varepsilon}$ with $r+\varepsilon < d_\h(X,M)$ in the proof of Theorem~\ref{thm:main1}(a); note that the balls do not cover $M$.}
\label{fig:proof-main-thm}
\end{figure}

If we had $r+\varepsilon < d_\h(X,M)$, then the collection $\{B(x,r+\varepsilon)\}_{x\in X}$ of open balls would not cover $M$; see Figure~\ref{fig:proof-main-thm}.
By the nerve lemma, $\cech{X}{r+\varepsilon}$ would be homotopy equivalent to an open proper subset of $M$, implying $H_n(\cech{X}{r+\varepsilon})=0$ by Lemma~\ref{lem:proper-subset}.
This would contradict the fact that $(\og \circ \oh)_*$ is nonzero on $H_n$.
Therefore, it must be that $r+\varepsilon \ge d_\h(X,M)$ for all $2\cdot d_\gh(X,M)<r
<\frac{1}{3}\rho(M)$ and sufficiently small $\varepsilon>0$.
Thus $2\cdot d_\gh(X,M) \ge d_\h(X,M)$, i.e.\ $d_\gh(X,M)\geq\frac{1}{2}d_\h(X,M)$, as desired.
\end{proof}

\section{Vietoris--Rips complexes and the case of the circle}
\label{sec:circle}

We show how to improve the constant on the term $\frac{1}{2}d_\h(X,M)$ in the lower bounds in Theorem~\ref{thm:main1}.
We first do so in this section in the case when $M=S^1$ is the circle, when we improve the constant $\frac{1}{2}$ all the way to $1$, which cannot be improved further.
Our main tool, Vietoris--Rips complexes, will also be important when improving the constant $\frac{1}{2}$ in the case of general manifolds $M$ in Section~\ref{sec:Jung}.

\subsection*{Vietoris--Rips complexes}
For a metric space $(X,d)$ and scale $r > 0$, the \emph{Vietoris--Rips} complex $\vr{X}{r}$ is defined as the simplicial complex having $X$ as its vertex set,
and each finite subset $\sigma$ of $X$ with $\diam(\sigma)<r$ as a simplex.
In other words, $[x_0,\ldots,x_k]$ is a simplex if $d(x_i,x_j)<r$ for all $0\le i,j\le k$.
See Figure~\ref{fig:vr}.

\begin{figure}[htb]
\centering
\includegraphics[width=1.1in]{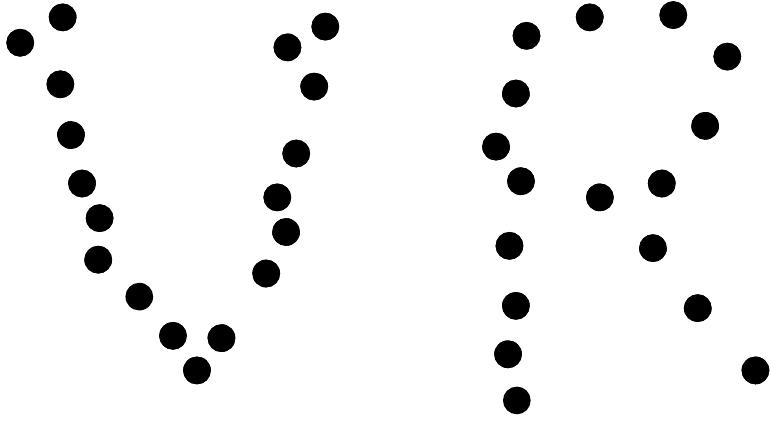}
\hspace{0.17in}
\includegraphics[width=1.1in]{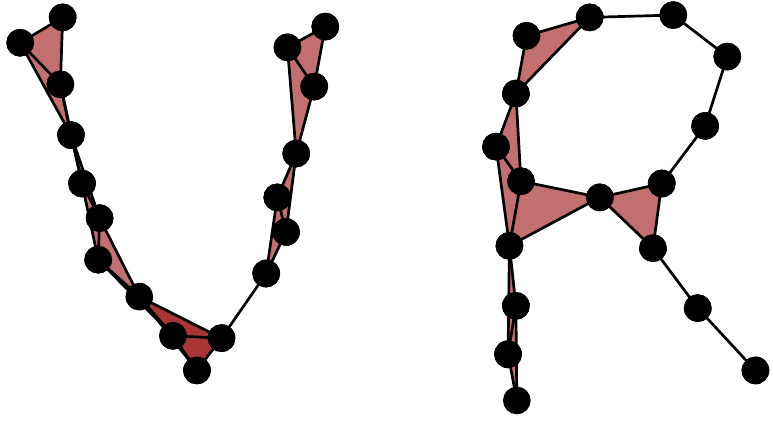}
\hspace{0.17in}
\includegraphics[width=1.1in]{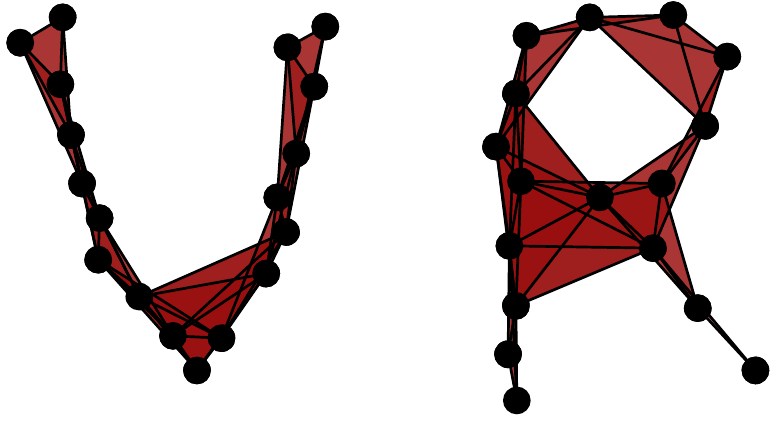}
\hspace{0.17in}
\includegraphics[width=1.1in]{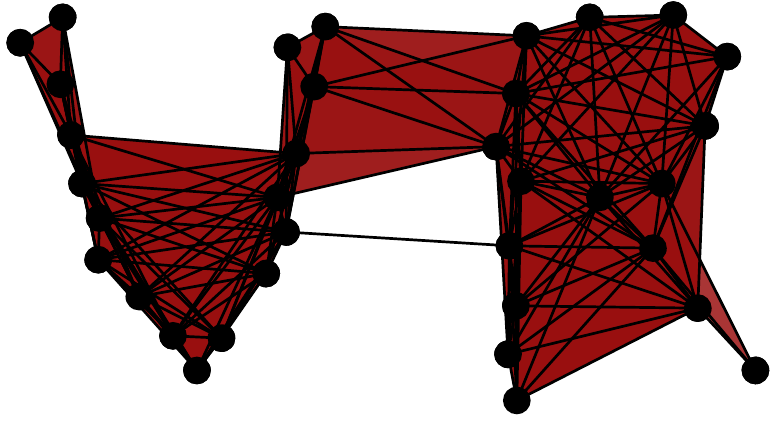}
\caption{A finite set $X$ of points in the plane, along with three Vietoris--Rips complexes $\vr{X}{r}$ with increasing values of $r>0$.}
\label{fig:vr}
\end{figure}

Vietoris--Rips complexes are related to the theory of Gromov--Hausdorff
distances because they allow us to turn discontinuous functions between metric spaces into continuous maps between simplicial complexes; this perspective from~\cite{ChazalDeSilvaOudot2014} was also recently used in~\cite{GH-BU-VR}.

\vspace{3mm}
\begin{lemma}[Correspondences and Vietoris--Rips complexes~\cite{ChazalDeSilvaOudot2014}]
\label{lem:correspondence-vr}
Let $X$ and $Y$ be metric spaces and let $r > 2\cdot d_\gh(X,Y)$.
Then for any $\varepsilon > 0$ and $\nu\geq0$, there exist simplicial maps
\[\vr{Y}{\varepsilon} \xlongrightarrow{\oh} \vr{X}{r+\varepsilon} \xhookrightarrow{\iota_\nu} \vr{X}{r+\varepsilon+\nu}
\xlongrightarrow{\og} \vr{Y}{2r+\varepsilon+\nu}\] 
such that the composition $\og\circ\iota_\nu\circ\oh$ is contiguous to the inclusion $\vr{Y}{\varepsilon}\hookrightarrow\vr{Y}{2r+\varepsilon+\nu}$.
\end{lemma}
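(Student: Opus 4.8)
The plan is to imitate the proof of Lemma~\ref{lem:correspondence-cech} almost verbatim. Since Vietoris--Rips complexes are intrinsic, no ambient space is needed, and every required inequality becomes a direct consequence of the triangle inequality together with the distortion bound of a correspondence.

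Since $r>2\cdot d_\gh(X,Y)=\inf_C\dis(C)$, I would first fix a correspondence $C\subseteq X\times Y$ with $\dis(C)<r$, and define (possibly discontinuous) functions $h\colon Y\to X$ and $g\colon X\to Y$ with $(h(y),y)\in C$ for every $y\in Y$ and $(x,g(x))\in C$ for every $x\in X$. I then extend $h$ and $g$ vertexwise, setting $\oh([y_0,\dots,y_k])=[h(y_0),\dots,h(y_k)]$ and $\og([x_0,\dots,x_k])=[g(x_0),\dots,g(x_k)]$. To see $\oh$ is simplicial: a simplex $[y_0,\dots,y_k]\in\vr{Y}{\varepsilon}$ satisfies $d_Y(y_i,y_j)<\varepsilon$, and since $(h(y_i),y_i),(h(y_j),y_j)\in C$ the distortion bound gives $d_X(h(y_i),h(y_j))\le d_Y(y_i,y_j)+r<r+\varepsilon$, so $\oh([y_0,\dots,y_k])\in\vr{X}{r+\varepsilon}$. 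The identical computation, now starting from $d_X(x_i,x_j)<r+\varepsilon$, yields $d_Y(g(x_i),g(x_j))<2r+\varepsilon$, so $\og\colon\vr{X}{r+\varepsilon}\to\vr{Y}{2r+\varepsilon}$ is simplicial.

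It remains to show the composition $\og\circ\oh\colon\vr{Y}{\varepsilon}\to\vr{Y}{2r+\varepsilon}$ is contiguous to the inclusion $\iota$. Fix $\sigma=[y_0,\dots,y_k]\in\vr{Y}{\varepsilon}$; one must check that $\{y_0,\dots,y_k\}\cup\{g(h(y_0)),\dots,g(h(y_k))\}$ has $Y$-diameter below $2r+\varepsilon$. Pairs of the first type satisfy $d_Y(y_i,y_j)<\varepsilon<2r+\varepsilon$; pairs of the second type satisfy $d_Y(g(h(y_i)),g(h(y_j)))<2r+\varepsilon$ because $[g(h(y_0)),\dots,g(h(y_k))]=\og(\oh(\sigma))$ is already known to be a simplex of $\vr{Y}{2r+\varepsilon}$; and for a mixed pair, using $(h(y_i),y_i)\in C$ and $(h(y_j),g(h(y_j)))\in C$ the distortion bound gives $d_Y(y_i,g(h(y_j)))\le d_X(h(y_i),h(y_j))+r<(r+\varepsilon)+r=2r+\varepsilon$, where $d_X(h(y_i),h(y_j))<r+\varepsilon$ since $\oh(\sigma)\in\vr{X}{r+\varepsilon}$. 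Hence $(\og\circ\oh)(\sigma)\cup\iota(\sigma)$ spans a simplex of $\vr{Y}{2r+\varepsilon}$, which is exactly contiguity.

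I do not expect a genuine obstacle: the only thing to watch is the bookkeeping of which complex each vertex belongs to, and the fact that the distortion error $r$ raises the scale by exactly one step at each stage (from $\varepsilon$ to $r+\varepsilon$ to $2r+\varepsilon$), which is what makes the three target complexes in the statement line up. (The lemma writes the composition as $\oh\circ\og$; as the diagram forces, this should read $\og\circ\oh\colon\vr{Y}{\varepsilon}\to\vr{Y}{2r+\varepsilon}$, or else one runs the identical argument with the roles of $X$ and $Y$ interchanged.)
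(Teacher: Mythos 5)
Your proposal is correct and matches the paper's own proof essentially step for step: fix a correspondence with distortion below $r$, extend the choice functions vertexwise to simplicial maps, and verify contiguity by bounding the mixed-pair distances via the distortion inequality. You are also right that the statement's $\oh\circ\og$ is a typo for $\og\circ\oh$ (the paper itself silently switches to the correct composition in the body of its proof).
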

\vspace{3mm}

We defer the proof to Appendix~\ref{app:proofs-interleavings}, since it is analogous to the proof of Lemma~\ref{lem:correspondence-cech}.

\begin{remark}
We remark that frequently one chooses $\nu=0$ in Lemma~\ref{lem:correspondence-vr}, in which case the inclusion map $\iota_\nu$ in the middle disappears.
This gives
\[\vr{Y}{\varepsilon} \xlongrightarrow{\oh} \vr{X}{r+\varepsilon} \xlongrightarrow{\og} \vr{Y}{2r+\varepsilon},\]
with the composition $\og\circ\oh$ contiguous to the inclusion $\iota\colon\vr{Y}{\varepsilon}\hookrightarrow\vr{Y}{2r+\varepsilon}$.
\end{remark}

Later sections will also require the following lemma, which gives a tighter interleaving in the case of subset inclusion.

\vspace{3mm}
\begin{lemma}[Hausdorff distance and Vietoris--Rips complexes]
\label{lem:rips-hausdorff}
Let $Z$ be a metric space, and let $Y\subseteq Z$ satisfy $r>2\cdot d_\h(Y,Z)$.
Then for any $\varepsilon > 0$ and $\nu \ge 0$, there exist simplicial maps
\[\vr{Z}{\varepsilon} \xlongrightarrow{\of} \vr{Y}{r+\varepsilon} \xhookrightarrow{\iota_\nu} \vr{Y}{r+\varepsilon+\nu}
\xhookrightarrow{\iota} \vr{Z}{r+\varepsilon+\nu}\]
such that the composition $\iota \circ \iota_\nu \circ \of$ is contiguous to the inclusion $\vr{Z}{\varepsilon}\hookrightarrow\vr{Z}{r+\varepsilon+\nu}$.
\end{lemma}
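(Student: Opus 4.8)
The plan is to mimic the proof of Lemma~\ref{lem:correspondence-cech}, but using the canonical correspondence coming from the Hausdorff distance rather than an abstract Gromov--Hausdorff correspondence, which gives us the extra control needed to replace one of the Vietoris--Rips maps by the honest inclusion $\iota$. First I would fix $r > 2\cdot d_\h(Y,Z)$ and choose $\delta$ with $2\cdot d_\h(Y,Z) < \delta < r$; since $Y\subseteq Z$ with $d_\h(Y,Z) < \delta/2$, for every $z\in Z$ there is a point of $Y$ within distance $\delta/2$ of it, so I may choose a (possibly discontinuous) map $f\colon Z\to Y$ with $d_Z(z, f(z)) < \delta/2$ for all $z\in Z$, and note that $f$ restricts to a map that moves points of $Y$ by less than $\delta/2$ as well (or, more simply, $f|_Y = \mathrm{id}$ can be arranged, but this is not needed).

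Next I would check that $f$ induces a simplicial map $\of\colon \vr{Z}{\varepsilon}\to\vr{Y}{r+\varepsilon}$. If $[z_0,\dots,z_k]\in\vr{Z}{\varepsilon}$ then $d_Z(z_i,z_j) < \varepsilon$ for all $i,j$, and by the triangle inequality $d_Y(f(z_i),f(z_j)) = d_Z(f(z_i),f(z_j)) \le d_Z(f(z_i),z_i) + d_Z(z_i,z_j) + d_Z(z_j,f(z_j)) < \delta/2 + \varepsilon + \delta/2 = \delta + \varepsilon < r+\varepsilon$, so $[f(z_0),\dots,f(z_k)]$ is a simplex of $\vr{Y}{r+\varepsilon}$; hence $\of$ is simplicial. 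The inclusion $\iota\colon\vr{Y}{r+\varepsilon}\hookrightarrow\vr{Z}{r+\varepsilon}$ is simplicial since $Y\subseteq Z$ and the Vietoris--Rips condition only involves pairwise distances, which are unchanged.

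For the contiguity claim, let $\sigma = [z_0,\dots,z_k]\in\vr{Z}{\varepsilon}$. I must show $(\iota\circ\of)(\sigma)\cup\iota_0(\sigma) = [z_0,\dots,z_k, f(z_0),\dots,f(z_k)]$ lies in a simplex of $\vr{Z}{r+\varepsilon}$, where $\iota_0\colon\vr{Z}{\varepsilon}\hookrightarrow\vr{Z}{r+\varepsilon}$ is the inclusion. It suffices to bound all pairwise distances among the $z_i$ and $f(z_j)$ by $r+\varepsilon$. We have $d_Z(z_i,z_j) < \varepsilon < r+\varepsilon$; we have $d_Z(f(z_i),f(z_j)) < \delta + \varepsilon < r+\varepsilon$ as above; and $d_Z(z_i, f(z_j)) \le d_Z(z_i,z_j) + d_Z(z_j,f(z_j)) < \varepsilon + \delta/2 < r+\varepsilon$. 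So the union is a simplex of $\vr{Z}{r+\varepsilon}$, proving $\iota\circ\of$ is contiguous to $\iota_0$.

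The only mild subtlety --- and the one place where the stated interleaving is genuinely tighter than what the generic Lemma~\ref{lem:correspondence-vr} would give --- is that here the ``return'' map is the literal inclusion $\iota$, so the target scale is only $r+\varepsilon$ rather than $2r+\varepsilon$; this works precisely because the correspondence $\{(f(z),z) : z\in Z\}\cup\{(y,y):y\in Y\}$ arising from the Hausdorff embedding has one-sided displacement $<\delta/2$ into $Y$ and the inclusion direction contributes zero distortion. I do not expect any real obstacle; the argument is a direct triangle-inequality bookkeeping, and the choice of the intermediate constant $\delta$ is what lets all the strict inequalities close up. (One should double-check the statement is using $\varepsilon$ exactly as written: with $r > 2d_\h(Y,Z)$ the above goes through for every $\varepsilon>0$, matching the lemma.)
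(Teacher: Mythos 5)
Your proof is correct and follows essentially the same route as the paper: choose a nearest-point map $f\colon Z\to Y$ witnessing the Hausdorff bound, check it is simplicial into $\vr{Y}{r+\varepsilon}$ by the triangle inequality, and verify contiguity by bounding all pairwise distances in $\{z_i\}\cup\{f(z_j)\}$. The auxiliary constant $\delta$ is unnecessary --- the paper simply takes $d_Z(z,f(z))<r/2$ directly since $d_\h(Y,Z)<r/2$ --- but it causes no harm.
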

\vspace{3mm}

We again defer the proof to Appendix~\ref{app:proofs-interleavings}.

\subsection*{The case of the circle}

Let $S^1$ be the circle with the geodesic metric and circumference $2\pi$.
In Theorem~\ref{thm:main2}, we use Vietoris--Rips complexes to improve the bound from Theorem~\ref{thm:main1} in the specific case of the circle. 
Indeed, we show how to improve the bound $d_\gh(X,S^1) \ge \frac{1}{2} \cdot d_\h(X,S^1)$ from Theorem~\ref{thm:main1} to be an \emph{equality}, namely $d_\gh(X,S^1) = d_\h(X,S^1)$, for $X\subseteq S^1$ sufficiently dense.

First, we prove a lemma demonstrating the equivalence of \v{C}ech and Vietoris--Rips complexes at small scales. 

\vspace{3mm}
\begin{lemma}[\v{C}ech--Rips equivalence]
\label{lem:circle-cech}
For any subset $X\subseteq S^1$ and $0<r<\frac{2\pi}{3}$, we have the simplicial inclusion $\vr{X}{r}\hookrightarrow\cech{X}{\frac{r}{2}}$. 
As a consequence, $\vr{X}{r}=\cech{X}{\frac{r}{2}}$.
\end{lemma}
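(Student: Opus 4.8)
My plan is to reduce the statement to a single geometric fact about arcs of $S^1$. The ``as a consequence'' clause is free once the displayed inclusion is established: in \emph{any} metric space one has $\cech{X}{r/2}\hookrightarrow\vr{X}{r}$, because a witness $x$ for $[a_0,\dots,a_k]\in\cech{X}{r/2}$ forces $d(a_i,a_j)\le d(a_i,x)+d(x,a_j)<r$ for all $i,j$ by the triangle inequality, so $[a_0,\dots,a_k]$ has diameter less than $r$ and hence lies in $\vr{X}{r}$; together with $\vr{X}{r}\hookrightarrow\cech{X}{r/2}$ this yields equality of the two subcomplexes of the full simplex on $X$. So I would concentrate entirely on the inclusion $\vr{X}{r}\hookrightarrow\cech{X}{r/2}$. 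As both complexes have vertex set $X$, it suffices to show that every simplex $\sigma=\{a_0,\dots,a_k\}$ of $\vr{X}{r}$ is a simplex of $\cech{X}{r/2}$, i.e.\ that any finite subset of $S^1$ of geodesic diameter less than $r$ lies in some open metric ball of $S^1$ of radius $r/2$ --- equivalently, in some open arc of length $r$.

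To prove this geometric claim I would fix a base point $a_0\in\sigma$ and record each $a_i$ by its signed arc displacement $\theta_i\in(-\pi,\pi)$ from $a_0$; this representative is well defined because $d(a_0,a_i)<r<\pi$, and then $|\theta_i|=d(a_0,a_i)<r$. Setting $\alpha=\max_i\theta_i$ and $\beta=-\min_i\theta_i$, the fact that $\theta_0=0$ gives $0\le\alpha<r$ and $0\le\beta<r$, and every $\theta_i$ lies in $[-\beta,\alpha]$, so $\sigma$ is contained in the closed arc $A$ of length $\alpha+\beta$ whose endpoints sit at displacements $-\beta$ and $\alpha$ from $a_0$.

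The heart of the argument will be the bound $\alpha+\beta<r$. Granting it, the midpoint $x$ of $A$ lies on $S^1$ and is at arc distance at most $\tfrac12(\alpha+\beta)<\tfrac r2<\pi$ from every point of $A$, hence at geodesic distance $d(x,a_i)<\tfrac r2$ from every $a_i$, so $x$ is the \v{C}ech witness we need. To get $\alpha+\beta<r$: the two endpoints of $A$ are themselves points of $\sigma$, so their geodesic distance $\min\{\alpha+\beta,\,2\pi-(\alpha+\beta)\}$ is less than $r$; if $\alpha+\beta\le\pi$ this minimum is $\alpha+\beta$ and we are done, while if $\alpha+\beta>\pi$ it is $2\pi-(\alpha+\beta)$, so $\alpha+\beta>2\pi-r$, which together with $\alpha+\beta<2r$ forces $3r>2\pi$, contradicting $r<\tfrac{2\pi}{3}$.

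I expect this wrap-around dichotomy to be the only delicate point, and it is exactly where the hypothesis $r<\tfrac{2\pi}{3}$ is used; the threshold is sharp, since three equally spaced points of $S^1$ have pairwise geodesic distance $\tfrac{2\pi}{3}$ yet are contained in no arc shorter than $\tfrac{4\pi}{3}$. Everything else --- the triangle-inequality inclusion, the midpoint estimate, and the repeated bookkeeping that arc lengths below $\pi$ coincide with geodesic distances --- should be routine.
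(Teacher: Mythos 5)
Your proposal is correct and takes essentially the same route as the paper: both arguments establish that a simplex of $\vr{X}{r}$ is contained in a single arc of length less than $r$ and then take the midpoint of that arc as the \v{C}ech witness at scale $r/2$. The only difference is presentational --- the paper fixes a diameter-realizing pair $x_0,x_k$ and asserts (using $r<\tfrac{2\pi}{3}$) that $\sigma$ lies on the short geodesic arc between them, whereas you set up signed arc coordinates from a base point $a_0$, take the extremal displacements $\alpha,\beta$, and prove $\alpha+\beta<r$ by the wrap-around dichotomy; your version spells out more explicitly exactly where the hypothesis $r<\tfrac{2\pi}{3}$ is used, which the paper leaves somewhat terse.
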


\begin{proof}
Let $\sigma=[x_0,\ldots,x_k]\in\vr{X}{r}$, which means $\diam(\sigma) < r$. 
Without any loss of generality, we assume that the geodesic distance between $x_0$ and $x_k$ equals $\diam(\sigma)$.
Since $r<\frac{2\pi}{3}$, this implies that $x_0$ and $x_k$ are the \emph{unique} points in $\sigma$ with $d(x_0,x_k)=\diam(\sigma)$, and furthermore that $\sigma$ is contained in the unique geodesic joining $x_0$ and $x_k$.
If $y\in S^1$ is the midpoint of this geodesic arc, then the metric ball 
of radius $\frac{r}{2}$ centered at $y$ covers $\sigma$, giving $\sigma\in\cech{X}{\frac{r}{2}}$.
This implies that $\vr{X}{r}\hookrightarrow\cech{X}{\frac{r}{2}}$.
It is always true that $\cech{X}{\frac{r}{2}}\hookrightarrow\vr{X}{r}$, due to the triangle inequality, giving the other direction of the equality $\vr{X}{r}=\cech{X}{\frac{r}{2}}$.
\end{proof}

This lemma will be generalized to the setting of a general manifold in Lemma~\ref{lem:rips-cech}, where we relate it to Jung's theorem~\cite{danzer1963helly}.
In the proof of Lemma~\ref{lem:circle-cech}, the fact that $\sigma$ is contained in the geodesic joining $x_0$ and $x_k$ reminds us of a local version of the definition of $\delta$-hyperbolicity~\cite{bridson2011metric} (small enough triangles in the circle are $\delta$-slim with $\delta=0$), and also of a local version of Helly's theorem~\cite{danzer1963helly,de2019discrete} (small enough arcs in the circle that intersect pairwise have a point of common intersection).

\vspace{3mm}
\mainTwo*

\begin{proof}[Proof of Theorem~\ref{thm:main2}]
Part (a) of Theorem~\ref{thm:main2} is obtained from part (b) by letting $Y=S^1$.
Hence it suffices to prove (b).

Let $\delta=d_\h(Y,S^1)$.
We will show if $d_\gh(X,Y)<\frac{\pi}{6}-\frac{\delta}{2}$, then $d_\gh(X,Y)\geq d_\h(X,S^1)-\delta$.
    
Fix $r$ such that $2\cdot d_\gh(X,Y) < r < \frac{\pi}{3}-\delta$, and fix $\varepsilon>0$ so that $r+\delta+\varepsilon < \frac{\pi}{3}$.
By Lemma~\ref{lem:correspondence-vr}, we get continuous maps
\[\vr{Y}{2\delta+2\varepsilon} \xlongrightarrow{\oh} \vr{X}{r+2\delta+2\varepsilon} \xlongrightarrow{\og} \vr{Y}{2r+2\delta+2\varepsilon}\]
whose composition $\og\circ\oh$ is homotopic to the inclusion $\iota:\vr{Y}{2\delta+2\varepsilon}\hookrightarrow\vr{Y}{2r+2\delta+2\varepsilon}$.
Since $2r+2\delta+2\varepsilon < \frac{2\pi}{3}$, using Lemma~\ref{lem:circle-cech} we can write these complexes as
\[\cech{Y}{\delta+\varepsilon} \xlongrightarrow{\oh} \cech{X}{\tfrac{r}{2}+\delta+\varepsilon} \xlongrightarrow{\og} \cech{Y}{r+\delta+\varepsilon}.\]
The two collections of balls $\{B(y;\delta+\varepsilon)\}_{y\in Y}$ and $\{B(y;r+\delta+\varepsilon)\}_{y\in Y}$ each form good open covers of $S^1$, since $d_\h(Y,S^1)<\delta+\varepsilon< r+\delta+\varepsilon<\frac{\pi}{3}<\frac{\pi}{2}=\rho(S^1)$.
The nerve lemma (Corollary~\ref{cor:cech-nerve}) therefore implies that $\iota$ and $\og \circ \oh$ are each homotopy equivalences between spaces homotopy equivalent to $S^1$.
So $\og_*\circ\oh_*$ is an isomorphism on $1$-dimensional homology.

If we had $\frac{r}{2}+\delta+\varepsilon<d_\h(X,S^1)$, then the collection of open balls $\left\{B\left(x,\frac{r}{2}+\delta+\varepsilon\right)\right\}_{x\in X}$ would not cover $S^1$.
By the nerve lemma, $\cech{X}{\frac{r}{2}+\delta+\varepsilon}$ would be homotopy equivalent to an open proper subset of $S^1$, implying $H_1\left(\cech{X}{\frac{r}{2}+\delta+\varepsilon}\right)=0$ by Lemma~\ref{lem:proper-subset}.
This would contradict the fact that $(\og\circ\oh)_*$ is nonzero on $H_1(\bullet)$.
Therefore, it must be that $\frac{r}{2}+\delta+\varepsilon \ge d_\h(X,S^1)$ for all $2\cdot d_\gh(X,Y) < r < \frac{\pi}{3}-\delta$ and sufficiently small $\varepsilon>0$.
So $d_\gh(X,Y)+\delta \ge d_\h(X,S^1)$, giving $d_\gh(X,Y) \geq d_\h(X,S^1)-d_\gh(Y,S^1)$, as desired.
\end{proof}

Instead of passing to \v{C}ech complexes, the above proof could have instead used only Vietoris--Rips complexes, using the \emph{winding fraction}~\cite{AA-VRS1,AAM,AAR} to determine when the Vietoris--Rips complex of a subset of the circle is homotopy equivalent to the circle or not, and the fact that $\vr{S^1}{r}\simeq S^1$ for $r<\frac{2\pi}{3}$.
This is how our first proof of Theorem~\ref{thm:main2}(a), the first result we proved during this research project, proceeded; see Appendix~\ref{app:additional} for this proof.





\section{A bound for two subsets of a manifold using Hausmann's theorem}\label{sec:manifold-simple-two}

Before producing a lower bound on the Gromov--Hausdorff distance between two subsets of a general manifold, we review Hausmann's theorem.
The basic idea is that Hausmann's theorem is an analogue of the nerve lemma for Vietoris--Rips complexes instead of \v{C}ech complexes.

\subsection*{Hausmann's theorem}

The following result by Hausmann states that $M$ is homotopy equivalent to its Vietoris--Rips complex for a scale smaller than the convexity radius.

\vspace{3mm}
\begin{theorem}[Hausmann's Theorem~\cite{Hausmann1995}]
\label{thm:hausmann}
For any $0<r<\rho(M)$, we have a homotopy equivalence $\vr{M}{r} \simeq M$.

Furthermore, if $0<r\le r'<\rho(M)$, then the inclusion $\vr{M}{r}\hookrightarrow\vr{M}{r'}$ is a homotopy equivalence~\cite{virk2021rips}.
\end{theorem}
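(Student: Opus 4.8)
The plan is to follow Hausmann's original strategy: construct an explicit ``crushing'' map $\kappa_r\colon\vr{M}{r}\to M$, produce a candidate homotopy inverse coming from a sufficiently fine triangulation of $M$, and verify that both composites are homotopic to the appropriate identities. To build $\kappa_r$, fix once and for all a well-ordering $\preceq$ of the underlying set of $M$. A point of $\vr{M}{r}$ lies in the simplex on some $x_0\prec x_1\prec\cdots\prec x_k$ with barycentric coordinates $(t_0,\dots,t_k)$; send it to the point $z_k$ obtained by iterated geodesic interpolation, where $z_0=x_0$ and, for $j\geq 1$, $z_j$ is the point a fraction $t_j/(t_0+\cdots+t_j)$ of the way along the unique minimizing geodesic from $z_{j-1}$ to $x_j$. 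Since $\diam[x_0,\dots,x_k]<r<\rho(M)$, all of the $x_i$ lie in the geodesically convex ball $B(x_0;r)$, so inductively every $z_j$ lies in that ball; there minimizing geodesics are unique and depend continuously on their endpoints, from which one checks that $\kappa_r$ is well-defined on faces and continuous. The key observation for later is that this formula never mentions $r$, so using the \emph{same} well-ordering one gets $\kappa_{r'}\circ\iota=\kappa_r$ for the inclusion $\iota\colon\vr{M}{r}\hookrightarrow\vr{M}{r'}$ whenever $0<r\le r'<\rho(M)$.

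For the candidate inverse, choose a triangulation $L$ of $M$ all of whose simplices have diameter less than a small number $\varepsilon$; whenever $\varepsilon<r$, every simplex of $L$ is a simplex of $\vr{M}{r}$, so sending each vertex of $L$ to itself is a simplicial map, and precomposing with a homeomorphism $M\cong|L|$ gives $\lambda_\varepsilon\colon M\to\vr{M}{r}$. Inspecting the interpolation formula shows that $\kappa_r\circ\lambda_\varepsilon\colon M\to M$ displaces every point by at most some $\varepsilon'$ with $\varepsilon'\to 0$ as $\varepsilon\to 0$; taking $\varepsilon$ small enough that $\varepsilon'$ is below the injectivity radius of $M$, the geodesic straight-line homotopy gives $\kappa_r\circ\lambda_\varepsilon\simeq\mathrm{id}_M$. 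Thus $\kappa_r$ is a retraction up to homotopy, and in particular is surjective on all homotopy groups.

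The remaining step --- and the one I expect to be the main obstacle --- is to promote $\kappa_r$ to a genuine homotopy equivalence by showing $\lambda_\varepsilon\circ\kappa_r\simeq\mathrm{id}_{\vr{M}{r}}$. The difficulty is that, at a fixed scale $r$, the map $\kappa_r$ can spread a single simplex of $\vr{M}{r}$ over a region of diameter nearly $r$, so $\lambda_\varepsilon\circ\kappa_r$ need not be contiguous to the identity and there is no ``slack'' inside $\vr{M}{r}$ in which to straighten it. Overcoming this is the technical heart of \cite{Hausmann1995}: one builds the homotopy by induction over skeleta, at each stage using the freedom of a sufficiently fine triangulation (equivalently, the fact that any compact piece of $\vr{M}{r}$ factors through $\vr{V}{r}$ for a finite net $V\subseteq M$) together with the geodesic convexity of small balls to fill in the required disks. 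I would reproduce that induction; I do not see a shortcut using only the nerve-lemma facts available above, since the \v{C}ech--Vietoris--Rips comparison $\vr{M}{r}\subseteq\cech{M}{r}\subseteq\vr{M}{2r}$ loses a factor of two at each step and therefore yields only persistence-type, rather than single-scale homotopy-type, information.

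Finally, the ``furthermore'' clause is an immediate consequence of the first part together with the compatibility $\kappa_{r'}\circ\iota=\kappa_r$ noted above. Given $0<r\le r'<\rho(M)$, let $\psi$ be a homotopy inverse of the homotopy equivalence $\kappa_{r'}$. Then $\iota\simeq\psi\circ\kappa_{r'}\circ\iota=\psi\circ\kappa_r$, and the right-hand side is a composite of homotopy equivalences; hence $\iota\colon\vr{M}{r}\hookrightarrow\vr{M}{r'}$ is a homotopy equivalence.
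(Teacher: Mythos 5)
The paper does not prove this statement; it cites it directly from Hausmann's 1995 paper and uses it as a black box. So there is no "paper's own proof" to compare against. That said, your sketch of Hausmann's argument is largely faithful in outline: the iterated-geodesic-interpolation definition of the crush map (Hausmann's map $T$, your $\kappa_r$) with a fixed well-ordering, the observation that its formula is independent of the scale $r$ so that $\kappa_{r'}\circ\iota=\kappa_r$, and the use of a fine net in $M$ to show that $\kappa_r$ is surjective on homotopy are all genuine ingredients of the original proof, and your derivation of the ``furthermore'' clause from the scale-independence of $\kappa$ is correct.

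But you have not given a proof; you have given an annotated outline with an explicitly acknowledged hole. The step you flag --- showing that $\kappa_r$ is injective on homotopy groups, equivalently that some left homotopy inverse exists, equivalently that the induction over skeleta can actually be carried out inside $\vr{M}{r}$ at a \emph{single} scale --- is precisely the content of Hausmann's theorem, and you say yourself ``I would reproduce that induction; I do not see a shortcut.'' One quiet issue worth naming concretely: your $\lambda_\varepsilon$ gives you $\kappa_r\circ\lambda_\varepsilon\simeq\mathrm{id}_M$, i.e.\ a right homotopy inverse for $\kappa_r$, which only gives surjectivity on homotopy groups; to conclude homotopy equivalence (say via Whitehead, using that $\vr{M}{r}$ is a CW complex) you still need injectivity, and nothing in the sketch supplies it. You correctly identify this as the technical heart of the theorem, but identifying the hard step is not the same as doing it. As it stands the proposal is an honest summary of Hausmann's strategy, not a proof.
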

\vspace{3mm}

See~\cite{AAF} for an alternate proof of Hausmann's theorem in the setting of Vietoris--Rips metric thickenings, which (with the $<$ convention) are now known to be weakly homotopy equivalent to the Vietoris--Rips simplicial complexes~\cite{gillespie2023vietoris,gillespie2022homological,HA-FF-ZV}.

\subsection*{Two subsets of a manifold}

We now prove Theorem~\ref{thm:main1}(b) for two subsets of a manifold, which states the following.
Let  $M$ be a connected, closed Riemannian manifold with convexity radius $\rho(M)$.
Then for any $X,Y\subseteq M$, we have
\[d_\gh(X,Y) \ge \min\left\{\tfrac{1}{2}d_\h(X,M)-d_\h(Y,M), \tfrac{1}{6}\rho(M)-\tfrac{2}{3}d_\h(Y,M)\right\}.\]

\begin{proof}[Proof of Theorem~\ref{thm:main1}(b)]
We will show if $d_\gh(X,Y)<\frac{1}{6}\rho(M)-\frac{2}{3}d_\h(Y,M)$, then $d_\gh(X,Y)\geq \frac{1}{2}d_\h(X,M)-\frac{\delta}{2}$ for any $2\cdot d_\h(Y,M)<\delta<\frac{1}{2}\rho(M)-3\cdot d_\gh(X,Y)$.
Rearranging and multiplying by $\frac{2}{3}$, we get
\[2\cdot d_\gh(X, Y) < \tfrac{1}{3}\rho(M)-\tfrac{2}{3}\delta.\]
So, we can fix $r$ such that $2\cdot d_\gh(X,Y) < r < \frac{1}{3}\rho(M)-\frac{2}{3}\delta$, and fix $\varepsilon>0$ so that $3r+2\delta+2\varepsilon < \rho(M)$.
We produce the following sequence of continuous maps.
The natural inclusion of the Vietoris--Rips complex into the \v{C}ech complex at the same scale gives the inclusion $\iota$.
The inclusion $\iota'$ always holds for \v{C}ech and Vietoris--Rips complexes after doubling the scale.
We note that $\iota'\circ\iota$ is equal to an inclusion (dashed), i.e., the middle triangle commutes.
Since $2\cdot d_\gh(X,Y) < r$, we use Lemma~\ref{lem:correspondence-vr} to get the maps $\oh$ and $\og$, which satisfy that $\og\circ\iota'\circ\iota\circ\oh$ is homotopic to the inclusion map.
\begin{equation*}
\begin{tikzpicture} [scale= 0.9, baseline=(current  bounding  box.center)]
\centering
\node (k1) at (-5.4,-1.5) {$\vr{M}{\varepsilon}$};
\node (k2) at (-5.4,0) {$\vr{Y}{\delta+\varepsilon}$};
\node (k3) at (-2.1,0) {$\vr{X}{r+\delta+\varepsilon}$};
\node (k4) at (0,-1.5) {$\cech{X}{r+\delta+\varepsilon}$};
\node (k5) at (2.1,0) {$\vr{X}{2r+2\delta+2\varepsilon}$};
\node (k6) at (6.3,0) {$\vr{Y}{3r+2\delta+2\varepsilon}$};
\node (k7) at (6.3,-1.5) {$\vr{M}{3r+2\delta+2\varepsilon}$};
\draw[map] (k1) to node[auto] {$\of$} (k2);
\draw[map] (k2) to node[auto] {$\oh$} (k3);
\draw[rinclusion] (k3) to node[auto] {$\iota$} (k4);
\draw[rinclusion] (k4) to node[auto] {$\iota'$} (k5);
\draw[map] (k5) to node[auto] {$\og$} (k6);
\draw[rinclusion] (k6) to node[auto] {$\iota''$} (k7);
\draw[rinclusion, dashed] (k3) to (k5);
\end{tikzpicture}        
\end{equation*}
Finally, since $\delta>2\cdot d_\h(Y,M)$, Lemma~\ref{lem:rips-hausdorff} gives the maps $\of$ and $\iota''$, which satisfy that $\iota''\circ\og\circ\iota'\circ\iota\circ\oh\circ\of$ is homotopic to the inclusion $\vr{M}{\varepsilon}\hookrightarrow\vr{M}{3r+2\delta+2\varepsilon}$. 
By Hausmann's theorem (Theorem~\ref{thm:hausmann}) and the fact that $\varepsilon < 3r+2\delta+2\varepsilon < \rho(M)$, this inclusion is a homotopy equivalence between spaces homotopy equivalent to $M$.
Thus the composition $\iota''\circ\og\circ\iota'\circ\iota\circ\oh\circ\of$ preserves the nonzero fundamental class in $n$-dimensional homology, where $n$ is the dimension of the manifold~$M$.

If we had $r+\delta+\varepsilon < d_\h(X,M)$, then the collection $\left\{B\left(x,r+\delta+\varepsilon\right)\right\}_{x\in X}$ of open balls would not cover $M$.
By the nerve lemma, $\cech{X}{r+\delta+\varepsilon}$ would be homotopy equivalent to an open proper subset of $M$, implying $H_n\left(\cech{X}{r+\delta+\varepsilon}\right)=0$ by Lemma~\ref{lem:proper-subset}, a contradiction.
Therefore, it must be that $r+\delta+\varepsilon \ge d_\h(X,M)$ for all $2\cdot d_\gh(X,Y) < r < \frac{1}{3}\rho(M)-\frac{2}{3}\delta$ with $\varepsilon>0$ sufficiently small.
So, $2 \cdot d_\gh(X,Y)+\delta \ge d_\h(X,M)$, giving $d_\gh(X,Y) \ge \frac{1}{2}d_\h(X,M)-\frac{\delta}{2} \ge \frac{1}{2}d_\h(X,M)-d_\h(Y,M)$.
\end{proof}

Instead of considering Vietoris--Rips complexes of the manifold $M$ in the first and last steps of the sequence of maps in the proof of Theorem~\ref{thm:main1}(b), it might be possible to instead apply Latschev's theorem~\cite{Latschev2001,lemevz2022finite,majhi2023demystifying}.

\begin{figure}[htb]
\centering
\includegraphics[width=5in]{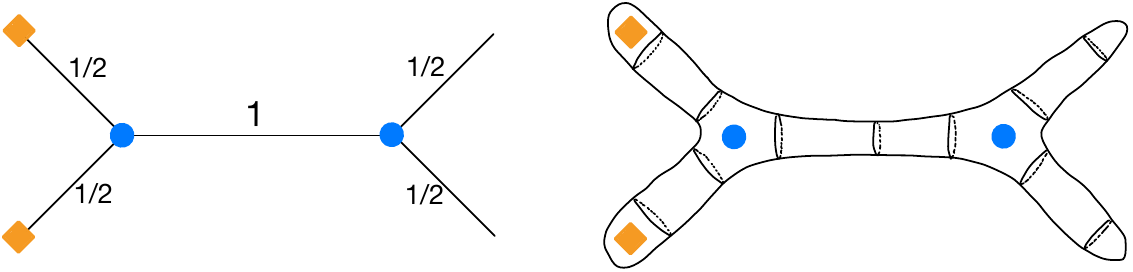}
\caption{
In both images, the subset $X$ (orange diamonds) consists of two points at distance $1$ apart, and same for the subset $Y$ (blue circles).
\emph{(Left)} A tree with four edges of length $\frac{1}{2}$ and one edge of length $1$.
Note $d_\h(X,M)=2=4\cdot\frac{1}{2}=4\cdot d_\h(Y,M)$.
\emph{(Right)} Riemannian surface $M$ is the boundary of the an $\varepsilon$-thickening of the tree in $\R^3$, equipped with the geodesic metric.
Note $d_\h(X,M)\approx 4\cdot d_\h(Y,M)$.
}
\label{fig:isometry-X-Y}
\end{figure}

\begin{remark}
A particular case of Theorem~\ref{thm:main1}(b) says the following.
If $X$ and $Y$ are two \emph{isometric} subsets of a connected, closed Riemannian manifold $M$, and if $Y$ is close enough to $M$ (meaning $d_\h(Y,M)<\frac{1}{4}\rho(M)$), then the Hausdorff distance from $X$ to $M$ and from $Y$ to $M$ cannot differ by more than a factor of two: $d_\h(X,M)\le 2\cdot d_\h(Y,M)$.
Indeed, if $X$ and $Y$ are isometric, then $d_\gh(X,Y)=0$, and $d_\h(Y,M)<\frac{1}{4}\rho(M)$ implies that the second term in the minimum on the right-hand side of Theorem~\ref{thm:main1}(b) is positive. 
This means that the first term in the minimum must be nonpositive, yielding $d_\h(X,M)\le 2\cdot d_\h(Y,M)$.
If $X$ and $Y$ are singletons, then the triangle inequality immediately gives $d_\h(X,M)\le 2\cdot d_\h(Y,M)$, without any need for the density assumption $d_\h(Y,M)<\frac{1}{4}\rho(M)$.
However, once the isometric subsets $X$ and $Y$ consist of at least two points, the density assumption $d_\h(Y,M)<\frac{1}{4}\rho(M)$ is needed in general.
Indeed, see the example in Figure~\ref{fig:isometry-X-Y}(right), in which $X$ and $Y$ each consist of two points in $M$ at unit distance apart, in which $Y$ is not dense enough compared to the convexity radius of $M$, and in which $d_\h(X,M)\approx 4\cdot d_\h(Y,M)$.
Of course, the full power of Theorem~\ref{thm:main1}(b) is that it can provide lower bounds on $d_\gh(X,Y)$ even when $X$ and $Y$ are not isometric; Figure~\ref{fig:isometry-X-Y} simply shows the necessity of a density assumption.    
\end{remark}

We conclude the section by presenting an immediate but useful corollary of Theorem~\ref{thm:main1}(b). 

\begin{corollary}\label{cor:two-subsets-eps}
Let $M$ be a connected, closed Riemannian manifold with convexity radius $\rho(M)$.
Then, for any $X, Y\subseteq M$ with $d_\h(Y, M)\leq\varepsilon$, we have
\[d_\gh(X,Y) \ge \min\left\{\tfrac{1}{2}d_\h(X,Y)-\tfrac{3}{2}\varepsilon, \tfrac{1}{6}\rho(M)-\tfrac{2}{3}\varepsilon\right\}.
\]
\end{corollary}
\begin{proof}
Applying the triangle inequality on the right side of Theorem~\ref{thm:main1}(b) we get
\begin{align*}
d_\gh(X,Y) &\ge \min\left\{\tfrac{1}{2}d_\h(X,M)-d_\h(Y,M), \tfrac{1}{6}\rho(M)-\tfrac{2}{3}d_\h(Y,M)\right\} \\
&\ge \min\left\{\tfrac{1}{2}[d_\h(X,Y)-d_\h(Y, M)]-d_\h(Y,M), \tfrac{1}{6}\rho(M)-\tfrac{2}{3}d_\h(Y,M)\right\} \\
&\ge \min\left\{\tfrac{1}{2}d_\h(X,Y)-\tfrac{3}{2}d_\h(Y,M), \tfrac{1}{6}\rho(M)-\tfrac{2}{3}d_\h(Y,M)\right\} \\
&\ge\min\left\{\tfrac{1}{2}d_\h(X,Y)-\tfrac{3}{2}\varepsilon, \tfrac{1}{6}\rho(M)-\tfrac{2}{3}\varepsilon\right\},\text{ since }d_\h(Y, M)\leq\varepsilon.
\end{align*}
\end{proof}

\section{Improving the convexity radius term via the filling radius}
\label{sec:fill-rad}

In~\cite{gromov1983filling}, Gromov defines the \emph{filling radius} of a manifold as follows.
Any metric space $(Z,d_Z)$ can be isometrically embedded into the space $L^\infty(Z)$ of all functions on $Z$, equipped with the supremum metric; this is called the Kuratowski embedding $\iota \colon Z \hookrightarrow L^\infty(Z)$ defined via $z\mapsto d_Z(z,\cdot)$.
The \emph{filling radius} of an $n$-dimensional manifold $M$ is the infimal scale parameter $r>0$ such that the inclusion $M \hookrightarrow B_{L^{\infty}(X)}(\iota(M);r)$ induces a non-injective homomorphism on $n$-dimensional homology, i.e., kills the fundamental class of $M$.
Here $B_{L^{\infty}(X)}(M;r)$ is the union of all balls in $L^\infty(X)$ of radius $r$ that are centered at a point in $\iota(M)$.
In~\cite{lim2020vietoris}, Lim, M\'{e}moli, and Okutan prove that the filling radius, denoted $\fr(M)$, can equivalently be defined as the supremal scale parameter $r>0$ such that the inclusion $\vr{M}{\varepsilon} \hookrightarrow \vr{M}{2r}$ induces an isomorphism on $n$-dimensional homology for all sufficiently small $\varepsilon>0$.
See~\cite[Proposition~9.6]{lim2020vietoris} for an application to Gromov--Hausdorff distances.

\begin{figure}[htb]
\centering
\includegraphics[width=3.5in]{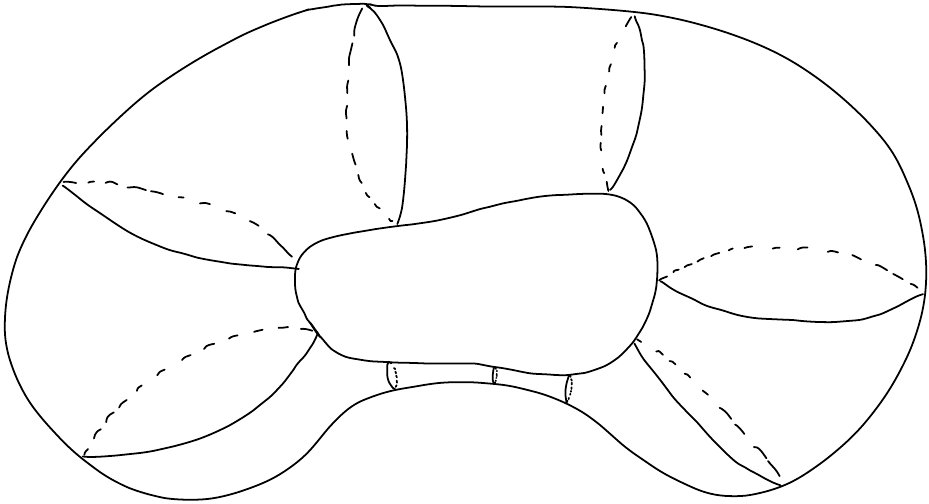}
\caption{
A two-dimensional torus $M$ whose convexity radius is small (determined by the thinnest part of the neck) and whose filling radius is large.
}
\label{fig:filling-radius}
\end{figure}

Theorem~\ref{thm:main3} replaces the $\frac{1}{6}\rho(M)$ convexity radius terms from Theorem~\ref{thm:main1} with a larger $\frac{1}{2}\rho(M)$ term.
The price we pay is introducing a new $\frac{1}{3}\fr(M)$ filling radius term.
In general, the filling radius $\fr(M)$ can be much larger than the convexity radius $\rho(M)$; see Figure~\ref{fig:filling-radius}.
That is why Theorem~\ref{thm:main3} can provide a large improvement over Theorem~\ref{thm:main1}.

\vspace{3mm}
\mainThree*

\begin{proof}
Part (a) is obtained from part (b) by letting $Y=M$.
Hence we prove (b).
We follow the same notation and almost the same argument as the proof of Theorem~\ref{thm:main1}(b).
As before, assume that $d_\gh(X,Y)$ is strictly smaller than the last two terms in the minimum.
Let $2\cdot d_\h(Y,M)<\delta<\min\{\rho(M)-2\cdot d_\gh(X,Y), \fr(M)-3\cdot d_\gh(X,Y)\}$.
Choose $r$ such that
\[2\cdot d_\gh(X,Y)<r<\min\left\{\rho(M)-\delta,\tfrac{2}{3}\fr(M)-\tfrac{2}{3}\delta\right\},\]
and choose $\varepsilon>0$ such that $r+\delta+\varepsilon<\rho(M)$ and $3r+2\delta+2\varepsilon<2\cdot\fr(M)$.
We get the same diagram: 
\begin{equation*}
\begin{tikzpicture} [baseline=(current  bounding  box.center)]
\centering
\node (k1) at (-4.8,-1.5) {$\vr{M}{\varepsilon}$};
\node (k2) at (-4.8,0) {$\vr{Y}{\delta+\varepsilon}$};
\node (k3) at (-1.85,0) {$\vr{X}{r+\delta+\varepsilon}$};
\node (k4) at (0,-1.5) {$\cech{X}{r+\delta+\varepsilon}$};
\node (k5) at (1.85,0) {$\vr{X}{2r+2\delta+2\varepsilon}$};
\node (k6) at (5.6,0) {$\vr{Y}{3r+2\delta+2\varepsilon}$};
\node (k7) at (5.6,-1.5) {$\vr{M}{3r+2\delta+2\varepsilon}$};
\draw[map] (k1) to node[auto] {$\of$} (k2);
\draw[map] (k2) to node[auto] {$\oh$} (k3);
\draw[rinclusion] (k3) to node[auto] {$\iota$} (k4);
\draw[rinclusion] (k4) to node[auto] {$\iota'$} (k5);
\draw[map] (k5) to node[auto] {$\og$} (k6);
\draw[rinclusion] (k6) to node[auto] {$\iota''$} (k7);
\draw[rinclusion, dashed] (k3) to (k5);
\end{tikzpicture}        
\end{equation*}
The composition is still contiguous to the inclusion $\vr{M}{\varepsilon}\hookrightarrow\vr{M}{3r+2\delta+2\varepsilon}$, which by the bound on the filling radius and~\cite{lim2020vietoris} induces an isomorphism on the fundamental class, i.e.\ on the $n$-dimensional homology group.
Furthermore, note that $r+\delta+\varepsilon<\rho(M)$ implies that the \v{C}ech complex $\cech{X}{r+\delta+\varepsilon}$ is homotopy equivalent to the union $B(X,r+\delta+\varepsilon)$.
Arguing as before, we obtain $d_\gh(X,Y)\ge\frac{1}{2}d_\h(X,M)-d_\h(Y,M)$, as desired.
\end{proof}

\section{Improving the Hausdorff distance constant via Jung's theorem}
\label{sec:Jung}

We review Jung's theorem, which will allow us to improve the constant on the term $\frac{1}{2}d_\h(X,M)$ in Theorem~\ref{thm:main1} to a better constant in Theorem~\ref{thm:main4}.
Jung's theorem allows us to produce a tighter interleaving between \v{C}ech and Vietoris--Rips complexes.

\subsection*{Jung's theorem}
The definition of sectional curvatures of an abstract manifold $M$ uses a lot of machinery from Riemannian geometry.
We skip the definition here, referring the reader to a textbook on the subject, e.g.,~\cite[Chapter 9]{BishopRichardL2001Gom}. 
For a point $p\in M$ and unit norm vectors $u,v\in T_p(M)$ in the tangent space of $M$ at $p$, the sectional curvature at $p$ along the plane spanned by $u$ and $v$ is denoted by $\kappa_p(u,v)$.
Intuitively, it measures the Gaussian curvature at $p$ for the $2$-dimensional submanifold with tangent plane spanned by $u$ and~$v$.
For example, the $n$-sphere of radius $R$ has a constant sectional curvature of $1/R^2$.

Let $\kappa(M)\in\R$ denote the supremum of the set of sectional curvatures $\kappa_p(u,v)$ across all $p\in M$ and all $u,v \in T_p(M)$.
Since $M$ is compact, it can be shown that $\kappa(M)$ is bounded; see for example~\cite[p.~166]{BishopRichardL2001Gom}. 

\subsection*{Jung's theorem for manifolds}
For a compact subset $A\subseteq M$, the diameter satisfies $\diam(A)<\infty$.
We define its \emph{circumradius} in $M$ to be
\[
R(A)\coloneqq \inf_{m\in M}\max_{a\in A}d_M(a,m).
\]
Intuitively, the circumradius is the radius of a smallest closed metric ball (when one exists) in $M$ that contains $A$. 

A point $c\in M$ satisfying $\max_{a\in A}d_M(a ,c)=R(A)$ is called a \emph{circumcenter} of $A$, and is denoted by $c(A)$.
For $A$ compact, the circumradius is uniquely defined, but a circumcenter may not exist.
When $M=\R^n$, however, the circumcenter exists uniquely. 
Moreover, for a compact Euclidean subset $A\subset\R^d$, the classical Jung's theorem~\cite[Theorem 2.6]{danzer1963helly} states that $R(A)\leq\sqrt{\tfrac{n}{2(n+1)}}~\diam(A)$.

Jung's theorem was further extended by Dekster in
\cite{Dekster1985AnEO,Dekster1995TheJT,Dekster1997}, first for compact subsets of Riemannian manifolds with constant sectional curvatures, and then for Alexandrov spaces of curvature bounded above.
A corollary in~\cite[Section 2]{Dekster1997} affirms that the circumcenter $c(A)$ exists (possibly non-uniquely) for a compact set $A$ if:
\begin{enumerate}
\item $A$ is contained in the interior of a compact geodesically convex domain
$C\subseteq M$, and
\item $\diam(C)<\frac{2\pi}{3\sqrt{\kappa(M)}}$ if $\kappa(M)>0$.
\end{enumerate}
Moreover, $c(A)$ belongs to the interior of $C$, and we have the
following bound on the circumradius of $A$ in terms of the diameter of $A$.

\vspace{3mm}
\begin{theorem}[Extended Jung's theorem~\cite{Dekster1997}]
\label{thm:Jung}
Let $M$ be a compact, connected, $n$--dimensional manifold with sectional curvatures bounded above by $\kappa\in\R$. 
Let $A\subseteq M$ be compact with $\diam(A)<\rho(M)$ if $\kappa\le 0$ and $\diam(A)<\min\left\{\rho(M),\tfrac{2\pi}{3\sqrt{\kappa}}\right\}$ if $\kappa>0$.
Then the circumcenter $c(A)$ exists in $M$, and
\[
\diam(A)\geq\begin{cases}
\frac{2}{\sqrt{-\kappa}}\sinh^{-1}\left(\sqrt{\frac{n+1}{2n}}
\sinh\left(\sqrt{-\kappa}\;R(A)\right)\right) &\text{for }\kappa<0\\
2R(A)\sqrt{\frac{n+1}{2n}} &\text{for }\kappa=0 \\
\frac{2}{\sqrt{\kappa}}\sin^{-1}\left(\sqrt{\frac{n+1}{2n}}
\sin\left(\sqrt{\kappa}\;R(A)\right)\right) 
&\text{for }\kappa>0\text{ and }R(A)\in
\left[0,\tfrac{\pi}{2\sqrt{\kappa}}\right].
\end{cases}
\]
\end{theorem}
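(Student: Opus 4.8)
This theorem is essentially \cite[Section~2]{Dekster1997}, so one option is simply to invoke it; for completeness I outline the comparison-geometry argument. The plan has two parts: (i) deduce the existence of a circumcenter $c=c(A)$ from the corollary of \cite{Dekster1997} recalled just before the statement, and (ii) bound $\diam(A)$ from below in terms of $R(A)$ by transporting the ``far'' points of $A$ into the tangent space $T_cM$ via $\exp_c^{-1}$, applying the classical Euclidean Jung lemma there, and comparing back to the simply connected space form of constant curvature $\kappa$ via the Rauch (equivalently, $\mathrm{CAT}(\kappa)$ hinge) comparison theorem.

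For (i), note that $R(A)\le\diam(A)$, so the hypothesis $\diam(A)<\rho(M)$ (respectively $\diam(A)<\min\{\rho(M),\tfrac{2\pi}{3\sqrt\kappa}\}$ when $\kappa>0$) is exactly what is needed to enclose $A$ in the interior of a compact, geodesically convex domain $C\subseteq M$ with $\diam(C)<\tfrac{2\pi}{3\sqrt\kappa}$ when $\kappa>0$ — for instance a closed geodesic ball about a near-optimal enclosing center, with radius pushed just inside the convexity radius; we omit this routine verification. Then the two conditions preceding Theorem~\ref{thm:Jung} hold, so Dekster's corollary yields a circumcenter $c$ lying in the interior of $C$. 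Every $a\in A$ then satisfies $d_M(a,c)\le R(A)<\rho(M)$, so $\exp_c^{-1}(a)\in T_cM$ is well defined; write $R=R(A)$, let $A'=\{a\in A : d_M(a,c)=R\}$ (nonempty and compact since $A$ is compact), and set $v_a=\tfrac1R\exp_c^{-1}(a)$, a unit vector in $T_cM\cong\R^n$.

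For (ii), the function $f(m)=\max_{a\in A}d_M(a,m)$ is minimized at $c$, and near $c$ it is a maximum of the functions $m\mapsto d_M(a,m)$, which are smooth there since $d_M(a,c)=R<\rho(M)$. By the first-order optimality condition for a minimizer of a maximum of smooth functions, $0$ lies in the convex hull of the gradients of the active functions at $c$, i.e.\ in $\mathrm{conv}\{-v_a : a\in A'\}$, so $0\in\mathrm{conv}\{v_a : a\in A'\}$. By Carathéodory, $0\in\mathrm{conv}\{v_0,\dots,v_k\}$ for some $a_0,\dots,a_k\in A'$ with $k\le n$. Writing $0=\sum\lambda_i v_i$ with $\lambda_i\ge0$, $\sum\lambda_i=1$, expanding $0=\bigl|\sum\lambda_i v_i\bigr|^2$ via the Lagrange identity, and using $\sum\lambda_i^2\ge\tfrac{1}{k+1}\ge\tfrac{1}{n+1}$, one gets $\min_{i\ne j}\langle v_i,v_j\rangle\le-\tfrac1n$. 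Hence there is a pair $a_i,a_j\in A'$ whose minimizing geodesics to $c$ subtend an angle $\theta$ at $c$ with $\cos\theta\le-\tfrac1n$.

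Finally, the two sides $ca_i,ca_j$ have length $R<\rho(M)$, and $R\le\tfrac{\pi}{2\sqrt\kappa}<\tfrac{\pi}{\sqrt\kappa}$ when $\kappa>0$, so the hinge at $c$ lies within the range where the comparison applies (curvature $\le\kappa$ pushes the first conjugate point to distance $\ge\tfrac{\pi}{\sqrt\kappa}$). Since all sectional curvatures of $M$ are $\le\kappa$, the Rauch/$\mathrm{CAT}(\kappa)$ hinge comparison gives $d_M(a_i,a_j)\ge d_\kappa$, where $d_\kappa$ is the distance in the space form of constant curvature $\kappa$ between two points at distance $R$ from a common point subtending angle $\theta$. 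As $d_\kappa$ is increasing in $\theta$ on $[0,\pi]$, we may replace $\theta$ by $\arccos(-\tfrac1n)$; then the Euclidean, spherical, and hyperbolic laws of cosines evaluate $d_\kappa$ at this angle to exactly the three right-hand sides in the statement, and $\diam(A)\ge d_M(a_i,a_j)$ finishes the proof. I expect the genuine obstacles to be the ``balancing'' step — carefully justifying on a manifold that $f$ is a max of smooth functions near $c$ and invoking the correct nonsmooth optimality condition to put $0$ in the convex hull of the far directions — and pinning down the precise range of validity of the curvature comparison in the positively curved case, which is exactly why the hypothesis restricts $R(A)$ to $\bigl[0,\tfrac{\pi}{2\sqrt\kappa}\bigr]$.
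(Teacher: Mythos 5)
The paper does not prove Theorem~\ref{thm:Jung} at all: it is stated as an external result cited from Dekster, with only the preceding corollary from the same source quoted to justify existence of the circumcenter. So there is no argument in the paper to compare yours against; what you have written is a supplementary reconstruction of the comparison-geometry proof, which you yourself acknowledge is optional.

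As a sketch, the second half of your argument is on the right track: a minimizer $c$ of $f(m)=\max_{a\in A}d_M(a,m)$, a first-order (Clarke/Danskin) optimality condition placing $0$ in the convex hull of the outward unit directions $\{v_a : a\in A'\}$ to the active points, Carath\'eodory plus the identity $0=\bigl|\sum\lambda_iv_i\bigr|^2$ to extract a pair with $\langle v_i,v_j\rangle\le-\tfrac1n$, and then a hinge comparison using the upper curvature bound $\le\kappa$ (so the third side of the hinge is at least the model-space length). The three laws of cosines in the model space of curvature $\kappa$ do evaluate, at $\cos\theta=-\tfrac1n$ and two sides of length $R$, to exactly the three expressions in the statement. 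The direction of the comparison inequality is the one you want: upper curvature bounds make geodesics spread at least as fast as in the model, so $d_M(a_i,a_j)\ge d_\kappa$.

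The step you wave off as ``routine verification'' in (i) is the one genuine gap. To invoke Dekster's corollary you need a compact geodesically convex $C\supseteq A$ with $A\subseteq\mathrm{int}(C)$ and, when $\kappa>0$, $\diam(C)<\tfrac{2\pi}{3\sqrt\kappa}$. Your candidate, a geodesic ball of radius slightly exceeding $\diam(A)$ about a point of $A$, is geodesically convex (its radius is below $\rho(M)$) but can have diameter as large as $2\diam(A)$, and the hypothesis only gives $\diam(A)<\tfrac{2\pi}{3\sqrt\kappa}$, not $\diam(A)<\tfrac{\pi}{3\sqrt\kappa}$. One cannot rescue this by appealing to $\rho(M)\le\tfrac{\pi}{2\sqrt\kappa}$ either, since $2\rho(M)$ can still exceed $\tfrac{2\pi}{3\sqrt\kappa}$. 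Either you need a sharper choice of enclosing convex domain (for instance a ball about a near-minimizer of $f$, for which you must already know something about the circumradius), or you should bypass the corollary and argue existence directly: by compactness of $M$ the function $f$ attains a minimum $R=R(A)\le\diam(A)<\rho(M)$, and this already guarantees that $c$ and all of $A$ live in a regime where the distance functions are smooth and the injectivity radius is not an issue. Either way, ``routine'' is doing more work here than it should.
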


This theorem (and the proof of Lemma~\ref{lem:circum} in Appendix~\ref{app:additional}) leads us to define the following constants.

\vspace{3mm}
\begin{definition}
\label{def:alpha}
For each integer $n\ge 1$ and $\kappa\in\R$, we define
\[
\alpha(n,\kappa)=\begin{cases}
\sqrt{\tfrac{n+1}{2n}}&\text{for }\kappa\leq0 \\
\sqrt{\tfrac{n+1}{2n}}\sin\left(\tfrac{\pi}{2}\sqrt{\frac{\kappa}{\kappa+1}}\right)/\left(\tfrac{\pi}{2}\sqrt{\frac{\kappa}{\kappa+1}}\right)&\text{for }\kappa>0.
\end{cases}
\]
\end{definition}
\noindent Note that $\alpha(n,\kappa)$ is continuous as a function of $\kappa$, since $\alpha(n,\kappa)\to\sqrt{\frac{n+1}{2n}}$ as $\kappa\to0$. 

Note that $\sqrt{\tfrac{n+1}{2n}}$ is a decreasing function of $n$ that approaches its infimum value $\frac{1}{\sqrt{2}}$ as $n\to \infty$.
So for $\kappa\le 0$, we have $\frac{1}{\sqrt{2}}\le \alpha(n,\kappa)\le 1$.
For $\kappa> 0$, we note that the function $\sin\left(\tfrac{\pi}{2}\sqrt{\frac{\kappa}{\kappa+1}}\right)/\left(\tfrac{\pi}{2}\sqrt{\frac{\kappa}{\kappa+1}}\right)$ is a decreasing function of $\kappa$ that approaches its infimumum value $\frac{2}{\pi}$ as $\kappa\to\infty$.
Therefore, for $n,\kappa$ arbitrary, we have $0.45 \approx\frac{\sqrt{2}}{\pi}= \frac{2}{\pi}\cdot\frac{1}{\sqrt{2}} \le \alpha(n,\kappa)\le 1$.
Our Theorem~\ref{thm:main4} only improves upon the $\frac{1}{2}\rho(M)$ term in Theorem~\ref{thm:main1} when $\alpha(n,\kappa) \ge \frac{1}{2}$, but this happens for a large range of $n$ and $\kappa$ values, and in particular whenever $\kappa\le 0$.

\vspace{3mm}
\begin{lemma}[Circumradius]
\label{lem:circum}
Let $A\subseteq M$ be compact with $\diam(A)< \tau$, where $\tau=\rho(M)$ for $\kappa\le 0$ and $\tau=\min\left\{\rho(M),\tfrac{\pi}{2\sqrt{\kappa+1}}\right\}$ for $\kappa>0$.
Then the circumcenter $c(A)$ exists in $M$, and the diameter and circumradius of $A$ satisfy
$\diam(A)\geq2\alpha(n,\kappa)\cdot R(A)$.
\end{lemma}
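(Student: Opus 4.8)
The plan is to deduce this lemma directly from the extended Jung's theorem (Theorem~\ref{thm:Jung}), translating its three case-dependent inequalities into the single clean bound $\diam(A) \ge 2\alpha(n,\kappa) R(A)$. First I would check the hypotheses of Theorem~\ref{thm:Jung} are met. In the case $\kappa \le 0$ the hypothesis there is exactly $\diam(A) < \rho(M) = \tau$, so there is nothing to do. In the case $\kappa > 0$, Theorem~\ref{thm:Jung} requires $\diam(A) < \min\{\rho(M), \tfrac{2\pi}{3\sqrt{\kappa}}\}$, whereas we assume $\diam(A) < \tau = \min\{\rho(M), \tfrac{\pi}{2\sqrt{\kappa+1}}\}$; I would verify the elementary inequality $\tfrac{\pi}{2\sqrt{\kappa+1}} \le \tfrac{2\pi}{3\sqrt{\kappa}}$, which rearranges to $9\kappa \le 16(\kappa+1)$, i.e.\ $0 \le 7\kappa + 16$, true for all $\kappa > 0$. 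Hence our hypothesis implies Jung's, the circumcenter $c(A)$ exists, and the relevant inequality of Theorem~\ref{thm:Jung} applies.

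Next I would handle each sign of $\kappa$. For $\kappa = 0$ the bound $\diam(A) \ge 2R(A)\sqrt{\tfrac{n+1}{2n}} = 2\alpha(n,0) R(A)$ is literally the conclusion. For $\kappa < 0$, starting from $\diam(A) \ge \tfrac{2}{\sqrt{-\kappa}}\sinh^{-1}\!\big(\sqrt{\tfrac{n+1}{2n}}\sinh(\sqrt{-\kappa}\,R(A))\big)$, I would use that $\sinh$ is convex and increasing on $[0,\infty)$ with $\sinh(0)=0$, so $\sinh(ct) \ge c\sinh(t)$ for $c \ge 1$ and $\sinh(ct) \le c\sinh(t)$ for $0\le c\le 1$; applying this with $c = \sqrt{\tfrac{n+1}{2n}} \ge 1$ gives $\sqrt{\tfrac{n+1}{2n}}\sinh(\sqrt{-\kappa}\,R(A)) \ge \sinh\!\big(\sqrt{\tfrac{n+1}{2n}}\sqrt{-\kappa}\,R(A)\big)$, and then $\sinh^{-1}$, being increasing, yields $\diam(A) \ge 2\sqrt{\tfrac{n+1}{2n}}R(A) = 2\alpha(n,\kappa)R(A)$. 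For $\kappa > 0$ the argument is the analogous concavity estimate for $\sin$: starting from $\diam(A) \ge \tfrac{2}{\sqrt{\kappa}}\sin^{-1}\!\big(\sqrt{\tfrac{n+1}{2n}}\sin(\sqrt{\kappa}\,R(A))\big)$, I would bound $\sin^{-1}(s) \ge s$ for $s \in [0,1]$ to get $\diam(A) \ge \tfrac{2}{\sqrt{\kappa}}\sqrt{\tfrac{n+1}{2n}}\sin(\sqrt{\kappa}\,R(A))$, and then use that $\tfrac{\sin x}{x}$ is decreasing on $[0,\tfrac{\pi}{2}]$: since $\diam(A) < \tfrac{\pi}{2\sqrt{\kappa+1}}$ forces (via Jung together with the case hypothesis) $R(A) \le \tfrac{\pi}{2\sqrt{\kappa}}\cdot\sqrt{\tfrac{\kappa}{\kappa+1}}^{\,?}$... more carefully, I would argue $\sqrt{\kappa}\,R(A) \le \tfrac{\pi}{2}\sqrt{\tfrac{\kappa}{\kappa+1}}$, so $\tfrac{\sin(\sqrt{\kappa}R(A))}{\sqrt{\kappa}R(A)} \ge \tfrac{\sin(\tfrac{\pi}{2}\sqrt{\kappa/(\kappa+1)})}{\tfrac{\pi}{2}\sqrt{\kappa/(\kappa+1)}}$, which after multiplying through reproduces exactly the factor $\alpha(n,\kappa)$ from Definition~\ref{def:alpha}.

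The main obstacle is the $\kappa > 0$ case: I need to establish the bound $\sqrt{\kappa}\,R(A) \le \tfrac{\pi}{2}\sqrt{\tfrac{\kappa}{\kappa+1}}$ (equivalently $R(A) \le \tfrac{\pi}{2\sqrt{\kappa+1}}$) from the hypothesis $\diam(A) < \tfrac{\pi}{2\sqrt{\kappa+1}}$. Since always $R(A) \le \diam(A)$ (indeed any circumball of radius $R(A)$ contains $A$, but one can also just note $R(A)\le \diam(A)$ directly when $A\neq\emptyset$ — actually $R(A) \le \diam(A)$ holds because centering at any point of $A$ gives $\max_{a}d(a,\cdot)\le \diam(A)$), this is immediate. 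This also confirms $R(A)$ lies in the interval $[0,\tfrac{\pi}{2\sqrt{\kappa}}]$ required by the third case of Theorem~\ref{thm:Jung}, since $\tfrac{\pi}{2\sqrt{\kappa+1}} < \tfrac{\pi}{2\sqrt{\kappa}}$. With this in hand, the monotonicity facts about $\tfrac{\sin x}{x}$ and $\sinh$ are routine, and the bookkeeping to match Definition~\ref{def:alpha} is straightforward. The degenerate case $A = \emptyset$ (where $R(A)$ and $\diam(A)$ conventions make the statement vacuous or trivial) should be noted in passing.
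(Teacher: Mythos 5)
Your proposal is essentially the paper's proof: verify Jung's hypotheses using $\tfrac{\pi}{2\sqrt{\kappa+1}} \le \tfrac{2\pi}{3\sqrt{\kappa}}$, use $R(A)\le\diam(A)$ to place $R(A)$ in the right interval, then for $\kappa>0$ apply $\sin^{-1}(s)\ge s$ and the monotonicity of $\sin(x)/x$ on $[0,\pi/2]$ to recover the constant $\alpha(n,\kappa)$. One minor difference in the $\kappa<0$ case: the paper sidesteps the $\sinh$ computation entirely by observing that $0$ is also an upper bound on the sectional curvatures, so Theorem~\ref{thm:Jung} with curvature bound $0$ directly gives $\diam(A)\ge 2\sqrt{\tfrac{n+1}{2n}}R(A)$, whereas you argue through the $\sinh^{-1}$/$\sinh$ formula and the convexity inequality $\sinh(ct)\le c\sinh(t)$ for $0\le c\le 1$; both routes are valid and reach the same place.

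There is a small slip in your $\kappa<0$ argument that you should fix: you write $c = \sqrt{\tfrac{n+1}{2n}} \ge 1$, but in fact $\sqrt{\tfrac{n+1}{2n}} \le 1$ for all $n\ge 1$ (with equality only at $n=1$). The inequality you actually need and write down, namely $\sqrt{\tfrac{n+1}{2n}}\sinh(\sqrt{-\kappa}\,R(A)) \ge \sinh\bigl(\sqrt{\tfrac{n+1}{2n}}\sqrt{-\kappa}\,R(A)\bigr)$, is the $0\le c\le 1$ branch of your own dichotomy, so the conclusion is correct; only the label on $c$ is wrong.
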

\vspace{3mm}

See Appendix~\ref{app:additional} for the proof.
The following lemma, illustrated in Figure~\ref{fig:rips-cech-interleaving}, generalizes Lemma \ref{lem:circle-cech} from the circle case to general manifolds.

\begin{figure}[htb]
\centering
\includegraphics[width=5in]{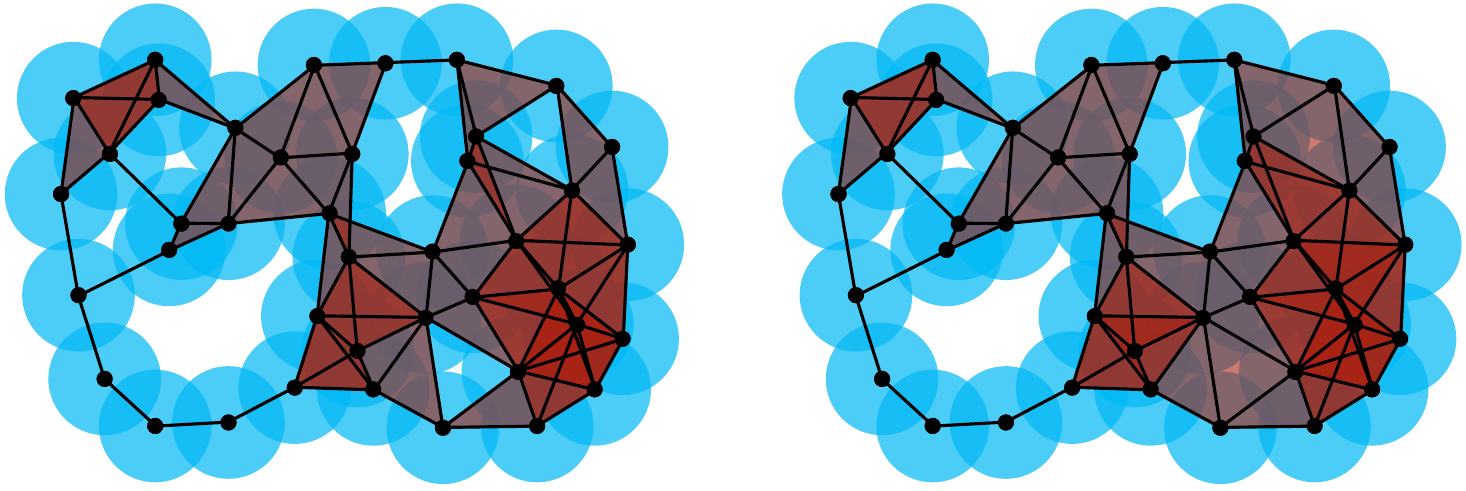}
\caption{An inclusion of simplicial complexes 
$\cech{X}{\frac{r}{2}}\hookrightarrow\vr{X}{r}$.
We also have an inclusion $\vr{X}{r}\hookrightarrow\cech{X}{\tfrac{r}{2\alpha(n,\kappa)}}$ obtained by increasing the size of the balls.
}
\label{fig:rips-cech-interleaving}
\end{figure}

\vspace{3mm}
\begin{lemma}
\label{lem:rips-cech}
Let $M$ be a closed Riemannian manifold with $X\subseteq M$. 
For any $0<r<\tau$, we have the inclusion $\vr{X}{r} \hookrightarrow \cech{X}{\tfrac{r}{2\alpha(n,\kappa)}}$, where $\tau=\rho(M)$ if $\kappa \le 0$ and
$\tau=\min\left\{\rho(M),\tfrac{\pi}{2\sqrt{\kappa+1}}\right\}$ if $\kappa>0$.
\end{lemma}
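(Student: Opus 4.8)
The plan is to mimic the proof of Lemma~\ref{lem:circle-cech}, replacing the ad hoc midpoint argument on the circle with the circumradius bound of Lemma~\ref{lem:circum}. First I would recall that the inclusion $\cech{X}{r/(2\alpha(n,\kappa))}\hookrightarrow\vr{X}{r}$ is automatic from the triangle inequality (together with $\alpha(n,\kappa)\le 1$, which guarantees $r/(2\alpha(n,\kappa))\ge r/2$), so the content is the reverse inclusion on simplices. Let $\sigma=[x_0,\ldots,x_k]\in\vr{X}{r}$, so that $A\coloneqq\{x_0,\ldots,x_k\}\subseteq M$ is a finite (hence compact) set with $\diam(A)<r$. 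Since $r<\tau$, the hypothesis $\diam(A)<\tau$ of Lemma~\ref{lem:circum} is satisfied, so the circumcenter $c(A)$ exists in $M$ and $\diam(A)\ge 2\alpha(n,\kappa)\cdot R(A)$, i.e.\ $R(A)\le \diam(A)/(2\alpha(n,\kappa))< r/(2\alpha(n,\kappa))$.

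Next I would unwind what the circumradius bound gives us about the ambient \v{C}ech complex. By definition $R(A)=\inf_{m\in M}\max_{a\in A}d_M(a,m)$, and since the circumcenter $c(A)\in M$ realizes this infimum, we have $d_M(x_i,c(A))=R(A)$ for the farthest $x_i$ and $d_M(x_i,c(A))\le R(A)$ for all $i$. Therefore every $x_i$ lies in the open ball $B_M(c(A);\,r/(2\alpha(n,\kappa)))$, because $d_M(x_i,c(A))\le R(A)< r/(2\alpha(n,\kappa))$. By the definition of the ambient \v{C}ech complex $\cech{X}{r/(2\alpha(n,\kappa))}$ — a finite subset of $X$ is a simplex iff there is a point of $M$ within the scale of all its vertices — this exhibits $c(A)$ as such a witness point, so $\sigma\in\cech{X}{r/(2\alpha(n,\kappa))}$. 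Since $\sigma$ was arbitrary, this proves the simplicial inclusion $\vr{X}{r}\hookrightarrow\cech{X}{r/(2\alpha(n,\kappa))}$, and the proof is complete.

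The only real subtlety — and the step I would state carefully rather than wave past — is the transition from the \emph{non-strict} inequality $\diam(A)\ge 2\alpha(n,\kappa)R(A)$ in Lemma~\ref{lem:circum} to the \emph{strict} inequality needed to land inside an open ball of radius exactly $r/(2\alpha(n,\kappa))$; this is handled by using $\diam(A)<r$ strictly (which holds because $\sigma\in\vr{X}{r}$ uses the strict convention), giving $R(A)\le\diam(A)/(2\alpha(n,\kappa))<r/(2\alpha(n,\kappa))$. A secondary point to verify is simply that Lemma~\ref{lem:circum} applies, i.e.\ that $r<\tau$ forces $\diam(A)<r<\tau$, which is immediate. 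No homotopy-theoretic input (nerve lemma, Hausmann's theorem) is needed here: this is a purely combinatorial-geometric statement about which subsets form simplices, and the whole weight of the argument rests on the generalized Jung inequality packaged in Lemma~\ref{lem:circum}. I would also remark, as the paper's figure caption anticipates, that the inclusion is obtained simply by enlarging the radius of the \v{C}ech balls from $r/2$ to $r/(2\alpha(n,\kappa))$, which is why it is only an inclusion and not an equality of complexes as in the circle case (where $\alpha=\sqrt{(n+1)/(2n)}$ with $n=1$ gives exactly $1$).
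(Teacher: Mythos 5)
Your argument is correct and matches the paper's proof: for a simplex $\sigma\in\vr{X}{r}$, the generalized Jung inequality (Lemma~\ref{lem:circum}) produces a circumcenter $c(\sigma)\in M$ with $R(\sigma)\le\diam(\sigma)/(2\alpha(n,\kappa))<r/(2\alpha(n,\kappa))$, and this center witnesses $\sigma$ as a simplex of the ambient \v{C}ech complex $\cech{X}{r/(2\alpha(n,\kappa))}$. One small inaccuracy in your opening aside: the inclusion $\cech{X}{r/(2\alpha(n,\kappa))}\hookrightarrow\vr{X}{r}$ is \emph{not} automatic from the triangle inequality --- that only gives $\cech{X}{r/2}\hookrightarrow\vr{X}{r}$, and since $\alpha(n,\kappa)\le 1$ the radius $r/(2\alpha(n,\kappa))\ge r/2$ is too large for the triangle-inequality argument to apply (compare Figure~\ref{fig:rips-cech-interleaving}, which depicts the interleaving $\cech{X}{r/2}\hookrightarrow\vr{X}{r}\hookrightarrow\cech{X}{r/(2\alpha(n,\kappa))}$). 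This does not affect your proof, since the lemma asserts only the inclusion you actually establish.
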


\begin{proof}
Let $\sigma\in\vr{X}{r}$, so $\diam(\sigma) < r <\tau$.
From Lemma~\ref{lem:circum}, it follows that $c(\sigma)$ exists in $M$, moreover $R(\sigma)\le\frac{\diam(\sigma)}{2\alpha(n,\kappa)}<\frac{r}{2\alpha(n,\kappa)}$.
Consequently, $\sigma$ must be contained in the metric ball of radius $\frac{r}{2\alpha(n,\kappa)}$ centered at $c(\sigma)$.
Therefore, $\sigma\in\cech{X}{\tfrac{r}{2\alpha(n,\kappa)}}$.
\end{proof}

\subsection*{Manifolds with an improved constant}
\label{sec:manifold}

The following theorem improves upon the constant $\frac{1}{2}$ in the Hausdorff distance term $\tfrac{1}{2}d_\h(X,M)$ in Theorem~\ref{thm:main1}.

\vspace{3mm}
\mainFour*
\vspace{3mm}

We note that $\alpha(n,\kappa)=\alpha(1,0)=1$ when $M=S^1$ is the unit circle.

\begin{proof}
Let $\alpha=\alpha(n,\kappa)$.
For all $2\cdot d_\h(Y,M)<\delta<\alpha\tau-(2\alpha+2)d_\gh(X,Y)$, we will show if $d_\gh(X,Y)<\frac{\alpha\tau-\delta}{2\alpha+2}$, then $d_\gh(X,Y)\geq \alpha \cdot d_\h(X,M)-\frac{\delta}{2}$.

Fix $r$ such that $2\cdot d_\gh(X,Y) < r < \frac{\alpha\tau-\delta}{\alpha+1}$, and fix $\varepsilon>0$ so that 
\begin{equation}
\label{eq:zeta-1}
\frac{(\alpha+1)r+\delta+\varepsilon}{\alpha} < \tau.
\end{equation}
We produce the following sequence of continuous maps.
Since $r+\delta+\varepsilon<(\alpha+1)r+\delta+\varepsilon<\alpha\tau\le \tau$ (where the middle step is from~\eqref{eq:zeta-1}), Lemma~\ref{lem:rips-cech} gives the inclusion $\iota$.
After doubling the scale, the inclusion $\iota'$ always holds for \v{C}ech and Vietoris--Rips complexes.
We note that $\iota'\circ\iota$ is equal to the dashed inclusion, so the middle triangle commutes.
Since $2\cdot d_\gh(X,Y) < r$, we use Lemma~\ref{lem:correspondence-vr} to get the maps $\oh$ and $\og$, which satisfy that $\og\circ\iota'\circ\iota\circ\oh$ is homotopic to the inclusion map.
\begin{equation*}
\begin{tikzpicture} [baseline=(current  bounding  box.center)]
\centering
\node (k1) at (-4.7,-1.5) {$\vr{M}{\varepsilon}$};
\node (k2) at (-4.7,0) {$\vr{Y}{\delta+\varepsilon}$};
\node (k3) at (-1.6,0) {$\vr{X}{r+\delta+\varepsilon}$};
\node (k4) at (0,-1.5) {$\cech{X}{\tfrac{r+\delta+\varepsilon}{2\alpha}}$};
\node (k5) at (1.6,0) {$\vr{X}{\tfrac{r+\delta+\varepsilon}{\alpha}}$};
\node (k6) at (5,0) {$\vr{Y}{r+\tfrac{r+\delta+\varepsilon}{\alpha}}$};
\node (k7) at (5,-1.5) {$\vr{M}{\tfrac{(\alpha+1)r+\delta+\varepsilon}{\alpha}}$};
\draw[map] (k1) to node[auto] {$\of$} (k2);
\draw[map] (k2) to node[auto] {$\oh$} (k3);
\draw[rinclusion] (k3) to node[auto] {$\iota$} (k4);
\draw[rinclusion] (k4) to node[auto] {$\iota'$} (k5);
\draw[map] (k5) to node[auto] {$\og$} (k6);
\draw[rinclusion] (k6) to node[auto] {$\iota''$} (k7);
\draw[rinclusion, dashed] (k3) to (k5);
\end{tikzpicture}        
\end{equation*}
Finally, Lemma~\ref{lem:rips-hausdorff} gives the maps $\of$ and $\iota''$, which satisfy that $\iota''\circ\og\circ\iota'\circ\iota\circ \oh\circ\of$ is homotopic to the inclusion $\vr{M}{\varepsilon}\hookrightarrow\vr{M}{\tfrac{(\alpha+1)r+\delta+\varepsilon}{\alpha}}$. 
By Hausmann's theorem (Theorem~\ref{thm:hausmann}) and the fact that $0<\varepsilon<\tfrac{(\alpha+1)r+\delta+\varepsilon}{\alpha}<\tau\leq \rho(M)$ from~\eqref{eq:zeta-1}, this inclusion is a homotopy equivalence between spaces homotopy equivalent to $M$.
Hence the composition $\iota''\circ\og\circ\iota'\circ\iota\circ \oh\circ\of$ preserves the fundamental class in $n$-dimensional homology.

If we had $\frac{r+\delta+\varepsilon}{2\alpha} < d_\h(X,M)$, then the collection $\left\{B\left(x,\frac{r+\delta+\varepsilon}{2\alpha}\right)\right\}_{x\in X}$ of open balls would not cover $M$.
By the nerve lemma, $\cech{X}{\frac{r+\delta+\varepsilon}{2\alpha}}$ would be homotopy equivalent to an open proper subset of $M$, implying $H_n\left(\cech{X}{\frac{r+\delta+\varepsilon}{2\alpha}}\right)=0$ by Lemma~\ref{lem:proper-subset}, contradicting the fact that the fundamental class is preserved.
Therefore, it must be that $\frac{r+\delta+\varepsilon}{2\alpha} \ge d_\h(X,M)$ for all $2\cdot d_\gh(X,Y) < r < \frac{\alpha\tau-\delta}{\alpha+1}$ and sufficiently small $\varepsilon>0$.
So, $2 \cdot d_\gh(X,Y)+\delta \ge 2\alpha\cdot d_\h(X,M)$.
Thus, $d_\gh(X,Y) \ge \alpha \cdot d_\h(X,M)-d_\h(Y,M)$.
\end{proof}

\begin{example}
\label{ex:tori}
The $n$-dimensional flat torus $T^n= S^1\times\ldots\times S^1$ is endowed with a Riemannian metric that makes the sectional curvatures at each point zero, giving $\alpha(n,\kappa)=\sqrt{\tfrac{n+1}{2n}}$.
The convexity radius is $\rho(T^n)=\frac{\pi}{2}$.
Theorem~\ref{thm:main4} gives that for any $X,Y\subseteq T^n$,
\[d_\gh(X,Y) \geq \min\left\{\sqrt{\tfrac{n+1}{2n}}d_\h(X,M)-d_\h(Y,M),\frac{\sqrt{\tfrac{n+1}{2n}}\tfrac{\pi}{2}-2d_\h(Y,M)}{2\sqrt{\tfrac{n+1}{2n}}+2}\right\}.\]
\end{example}

\section{Ratio can be arbitrarily small in general}
\label{sec:ratio}

We next show that in general, the Hausdorff and Gromov--Hausdorff distances between general compact metric spaces and a subspace thereof can be arbitrarily far apart.
Indeed, in Theorem~\ref{thm:main5} we show that for any $\varepsilon>0$, there exists a compact metric space $Z$ and a subset $X\subseteq Z$ with $d_\gh(X,Z) < \varepsilon \cdot d_\h(X,Z)$.

The ``compact'' hypothesis makes this result meaningful, since compact metric spaces do not admit isometries to proper subsets.
By contrast, note that the non-compact metric space $[0,\infty)$ with the Euclidean metric has an isometric subspace $[n,\infty)$ satisfying $d_\gh([0,\infty),[n,\infty))=0$ and $d_\h([0,\infty),[n,\infty))=n$ for all $n>0$.

\vspace{3mm}
\mainFive*
\vspace{3mm}

\begin{proof}
We consider $\R^n$ with the Euclidean $\ell_2$-metric.
Denote the standard basis vectors in $\R^n$ by $e_1, \dots, e_n$, and let $x_j = \sum_{i=1}^j i\cdot e_i$ \ for $j\in\{1,\ldots ,n\}$.
Now let $Z = \{x_1, \dots, x_n\}$ and $X = \{x_1,\dots, x_{n-1}\} \subseteq Z$.
For the Hausdorff distance between $X$ and $Z$, we have that $d_\h(X,Z) = \|x_n - x_{n-1}\| = n$.
Consider the linear map $f \colon \R^n \to \R^n$ defined by $f(e_i) = e_{i+1}$ for $i \in \{1,\dots, n-1\}$ and $f(e_n) = e_1$.
For $j \in \{2,\dots,n\}$, we compute $\|f(x_{j-1}) - x_j\| = \|\sum_{i=1}^j e_i\| = \sqrt{j}$.
Further, $\|f(x_1) - x_1\| = \|e_2 - e_1\| = \sqrt{2}$.
Also $\min\{\|f(x_i) - x_n\|~:~1\le i\le n-1\} = \sqrt{n}$.
Thus we get that $d_\h(f(X),Z) = \sqrt{n}$.
Since $f$ is an isometry of $\R^n$ this proves that ${d_\gh(X,Z) \le \sqrt{n}}$.
This proves the theorem, by picking $n$ large enough so that $\frac{1}{\sqrt{n}}<\varepsilon$.
\end{proof}

\begin{remark}
\label{rem:boundary-and-not-dense}
The proof above shows that the ratio between the Hausdorff distance of compact subsets $X\subseteq Z$ in $\R^n$ and the infimal Hausdorff distance of isometric copies of these subsets in~$\R^n$ may be arbitrarily far from one.
The sets $X$ and $Z$ in the proof above can be replaced by the compact simplices $\conv \ X$ and $\conv \ Z$ with the same Hausdorff distances, that is, $d_\h(\conv \ X, \conv \ Z) = n$, while $d_\h(f(\conv \ X), \conv \ Z) = \sqrt{n}$.
We note that $\conv \ X$ and $\conv \ Z$ are manifolds with boundary.
\end{remark}

\section{Conclusion and open questions}
\label{sec:conclusion}

We have furthered the study of the relationship between the Hausdorff and Gromov--Hausdorff distances, particularly for sufficiently dense subsets of a closed Riemannian manifold.
We conclude with a discussion of how this project arose, along with a list of open questions.

In our paper, we have used the convenient definition of the Gromov--Hausdorff distance as $d_{\gh}(X,Y) = \tfrac{1}{2} \inf_{C \subseteq X \times Y}\dis(C)$,
where the infimum is taken over all correspondences between $X$ and $Y$.
It was observed in~\cite{kalton1999distances} that the Gromov--Hausdorff distance can be equivalently defined as
\[d_{\gh}(X,Y) = \tfrac{1}{2}\inf_{g,h}\max\left\{\dis(g),\dis(h),\codis(g,h)\right\},\]
where $g\colon X \to Y$ and $h\colon Y \to X$ are possibly discontinuous functions.
Here the distortion of the function $g$ is
\[\dis(g)=\sup_{x,x'\in X}|d_{X}(x,x')-d_{Y}(g(x),g(x'))|\]
(and similarly for $\dis(h)$), and the codistortion of the pair of functions $g$ and $h$ is
\[\codis(g,h)=\sup_{x\in X,y\in Y}|d_{X}(x,h(y))-d_{Y}(g(x),y)|.\]
The papers~\cite{lim2021gromov,GH-BU-VR} on Gromov--Hausdorff distances between spheres gave lower bounds on $d_\gh(X,Y)$ by lower bounding $\dis(h)$, the distortion of functions $Y \to X$, in particular in a setting where $Y$ was a higher-dimensional sphere, $X$ was a lower-dimensional sphere, and it sufficed to consider odd functions.
Topological obstructions were used in both~\cite{lim2021gromov,GH-BU-VR} to bound $\dis(h)$, and~\cite{GH-BU-VR} obtained these topological obstructions by first converting the (possibly discontinuous) function $h$ into a continuous map between Vietoris--Rips complexes.
By contrast, our paper on Hausdorff versus Gromov--Hausdorff distances arose in part while looking for lower bounds on $d_\gh(X,Y)$ that also involved $\codis(g,h)$.
We succeeded by noting that control on $\codis(g,h)$ allowed us to show that a (possibly discontinuous) composition $g\circ h\colon Y\to Y$ produced a continuous map on Vietoris--Rips complexes $\overline{g}\circ\overline{h}\colon\vr{Y}{r}\to\vr{Y}{r'}$ with $r \le r'$ that was contiguous to the inclusion map $\vr{Y}{r}\hookrightarrow\vr{Y}{r'}$ (see Lemma~\ref{lem:correspondence-vr}), and similarly for \v{C}ech complexes (Lemma~\ref{lem:correspondence-cech}).

Our investigations of Hausdorff versus Gromov--Hausdorff distances have touched upon a number of naturally arising open questions which we believe merit further study.
We list these questions here:

\vspace{3mm}
\begin{question}
The constant $\alpha(n,\kappa)$ in Theorem~\ref{thm:main4} attains the value $1$ when $M$ is the circle, which cannot be improved.
Are the constants $\alpha(n,\kappa)$ optimal in some sense for more general manifolds $M$?
\end{question}

\vspace{3mm}
\begin{question}
In Theorem~\ref{thm:main2}, we have $d_\gh(X,S^1)=d_\h(X,S^1)$ for $X\subseteq S^1$ with $d_\h(X,S^1)<\frac{\pi}{6}$.
Is $\frac{\pi}{6}$ optimal in order to achieve equality, i.e., can we construct an example subset $X$ with density slightly below $\frac{\pi}{6}$ and $d_\gh(X,S^1) < d_\h(X,S^1)$?
More generally, what happens to the ratio
\[\inf\left\{\frac{d_\gh(X,S^1)}{d_\h(X,S^1)}~:~X\subseteq S^1\text{ with }d_\h(X,S^1)\ge \delta\right\}\]
as $\delta$ decreases beneath $\frac{\pi}{6}$?
\end{question}

\vspace{3mm}
\begin{question}
Our results are for a manifold $M$ equipped with the Riemannian metric.
If $M$ is instead a manifold embedded in Euclidean space, and equipped with the Euclidean metric, then can one prove analogous results?
\end{question}

\vspace{3mm}
\begin{question}
What if $M$ is not a manifold, but instead say a graph, or a stratified space, or a length space?
When can we prove that for dense enough samplings $X,Y\subseteq M$, the Gromov--Hausdorff distance $d_\gh(X,Y)$ is lower bounded in terms of the Hausdorff distance?
\end{question}

\vspace{3mm}
\begin{question}
\label{ques:not-dense}
Let $M$ be a connected, closed Riemannian manifold.
As explained at the end of Section~\ref{sec:manifold-simple-two}, 
Theorem~\ref{thm:main1}(b) implies that if $X$ and $Y$ are two isometric subsets, and if $Y$ dense enough ($d_\h(Y,M)<\frac{1}{4}\rho(M)$), then the Hausdorff distance from $X$ to $M$ and from $Y$ to $M$ cannot differ by more than a factor of two: $d_\h(X,M)\le 2\cdot d_\h(Y,M)$.
The example in Figure~\ref{fig:isometry-X-Y} shows that a density assumption like $d_\h(Y,M)<\frac{1}{4}\rho(M)$ is needed, since otherwise we could have $d_\h(X,M)\approx 4\cdot d_\h(Y,M)$.
For $X$ and $Y$ two isometric subsets of $M$ with no density assumption, how large can the ratio $\sup\left\{\tfrac{d_\h(X,M)}{d_\h(Y,M)}~|~0<d_\h(Y,M)\le d_\h(X,M)\right\}$ be?
\end{question}

\vspace{3mm}
\begin{question}
Are there variants of our results (Theorems~\ref{thm:main1},~\ref{thm:main3}, and~\ref{thm:main4}) for subsets of a complete manifold without boundary (that could be unbounded)?
For example, are there variants of our results for subsets of $\R^n$?
Does Borel--Moore homology (or alternatively cohomology with compact support), in which every oriented manifold has a fundamental class, play a role?
\end{question}

\vspace{3mm}
\begin{question}
As a specific example of the question above, are there versions of our results that hold for two periodic point sets in $\R^n$, where a periodic point set is defined as the image of a set of points in $[0,2\pi]^n$ under the natural action of $(2\pi\Z)^n$ on $\R^n$?
One can recast the periodic point set as a point set in the Torus $T^n$, and then proceed as in Example~\ref{ex:tori}.
But it is not clear whether non-periodic correspondences on the point sets on $\R^n$ can have smaller distortion than periodic correspondences induced from correspondences on point sets in $T^n$.

More generally, let $M$ be a complete manifold without boundary.
Suppose $G$ is a group that acts properly and by isometries on $M$, and that $X,Y\subseteq M$ are $G$-invariant subsets.
Then the quotient space $M/G$ is a metric space that contains $X/G$ and $Y/G$.
What can be said about the relationship between $d_\gh(X/G,Y/G)$ and $d_\gh(X,Y)$, especially for $X,Y\subseteq M$ sufficiently dense?
We note that if $M/G$ is furthermore a compact manifold without boundary, then Theorem~\ref{thm:main1} applies to give that
\begin{align*}
&d_\gh(X/G,Y/G)\\
\ge& \min\left\{\tfrac{1}{2}d_\h(X/G,M/G)-d_\h(Y/G,X/G), \tfrac{1}{6}\rho(M/G)-\tfrac{2}{3}d_\h(Y/G,M/G)\right\}\\
=& \min\left\{\tfrac{1}{2}d_\h(X,M)-d_\h(Y,M), \tfrac{1}{6}\rho(M/G)-\tfrac{2}{3}d_\h(Y,M)\right\}.
\end{align*}
\end{question}

\vspace{3mm}
\begin{question}
What if $M$ is not a closed manifold, but instead a compact manifold with boundary $\partial M \neq \emptyset$?

Let $X,Y \subseteq M$ with $\partial M \neq \emptyset$.
One thing that could be done in this situation is to define $N$ to be the closed manifold ($\partial N = \emptyset$) obtained by gluing two copies of $M$ together along their common boundary.
Now, define $X_N,Y_N\subseteq N$ to be the subsets obtained by duplicating $X$ and $Y$ (although points in the boundary of $M$ will not be duplicated).
Suppose furthermore that $N$ is a closed Riemannian manifold with convexity radius $\rho(N)$.
Then Theorem~\ref{thm:main1} applies to give that
\begin{align*}
d_\gh(X_N,Y_N)
&\ge \min\left\{\tfrac{1}{2}d_\h(X_N,N)-d_\h(Y_N,N), \tfrac{1}{6}\rho(N)-\tfrac{2}{3}d_\h(Y_N,N)\right\}\\
&= \min\left\{\tfrac{1}{2}d_\h(X,M)-d_\h(Y,M), \tfrac{1}{6}\rho(N)-\tfrac{2}{3}d_\h(Y,M)\right\}.
\end{align*}
What can be said about the relationship between $d_\gh(X,Y)$ and $d_\gh(X_N,Y_N)$, especially for $X,Y\subseteq M$ sufficiently dense?
\end{question}

\vspace{3mm}
\begin{question}
For $X,Y\subseteq \R^n$, there is a nesting $d_\gh(X,Y) \le d_{\mathrm{H,iso}}^{\R^n}(X,Y) \le d_\h(X,Y)$, where $d_{\mathrm{H,iso}}^{\R^n}$ allows one to align $X$ and $Y$ only via Euclidean isometries; see~\cite{majhi2023GH,memoli2008euclidean}.
Similarly, if $M$ is a manifold with symmetries, and $X,Y\subseteq M$, there is a nesting $d_\gh(X,Y) \le d_{\mathrm{H,iso}}^M(X,Y) \le d_\h(X,Y)$, where $d_{\mathrm{H,iso}}^M$ allows one to align $X$ and $Y$ only via isometries of the manifold $M$.
Can the techniques in our paper be extended to place better bounds on $d_{\mathrm{H,iso}}^M$?
\end{question}

\section*{Acknowledgements}

HA and SM would like to thank the Department of Mathematics at the University of Florida for hosting a research visit.
HA was funded in part by the Simons Foundation's Travel Support for Mathematicians program.
FF was funded in part by NSF CAREER Grant DMS 2042428.

\bibliographystyle{plain}
\bibliography{HausdorffVsGromovHausdorff.bib}

\appendix

\section{Proofs of interleaving lemmas}
\label{app:proofs-interleavings}

This appendix contains the proofs of Lemmas~\ref{lem:correspondence-vr} and~\ref{lem:rips-hausdorff}.
We include these proofs here so that our paper is self-contained, but we also refer the reader to~\cite{ChazalDeSilvaOudot2014} for more details.

\begin{proof}[Proof of Lemma~\ref{lem:correspondence-vr}]
Let $X$ and $Y$ be metric spaces and let $r > 2\cdot d_\gh(X,Y)$.
We must show that for any $\varepsilon > 0$ and $\nu\geq0$, there exist simplicial maps
\[\vr{Y}{\varepsilon} \xlongrightarrow{\oh} \vr{X}{r+\varepsilon}
\xhookrightarrow{\iota_\nu}
\vr{X}{r+\varepsilon+\nu}
\xlongrightarrow{\og} \vr{Y}{2r+\varepsilon+\nu}\] 
such that the composition $\oh\circ\iota_\nu\circ\og$ is contiguous to the inclusion $\iota\colon\vr{Y}{\varepsilon}\hookrightarrow\vr{Y}{2r+\varepsilon+\nu}$.

Since $2\cdot d_\gh(X,Y)<r$, there exists a correspondence $C\subseteq X\times Y$ with $\dis(C)<r$.
We can define (possibly discontinuous) functions $g\colon X\to Y$ and $h\colon Y\to X$ such that $(x,g(x))\in C$ for any $x\in X$ and $(h(y),y)\in C$ for any $y\in Y$.

We first show that the map $h$ extends to a simplicial map
$\oh\colon \vr{Y}{\varepsilon}\to \vr{X}{r+\varepsilon}$.
Take a simplex $\sigma=[y_0,\ldots,y_k]\in\vr{Y}{\varepsilon}$. 
By the definition of Vietoris--Rips complex, the diameter of $\sigma$ is smaller than than $\varepsilon$, i.e., $d_Y(y_i,y_j)<\varepsilon$ for $0 \le i,j \le k$.
Since $(h(y_i),y_i),(h(y_j),y_j)\in C$, by the definition of distortion we have
\[
d_X(h(x_i),h(x_j))\leq d_Y(y_i,y_j)+r < r+\varepsilon.
\]
So, the diameter of $\oh(\sigma)=[h(y_0),\ldots,h(y_k)]$ is smaller than $r+\varepsilon$, implying that $\oh(\sigma)\in\vr{X}{r+\varepsilon}$, and so $\oh$ is a simplicial map.

Since $\nu\geq0$, the existence of the natural inclusion map $\iota_\nu$ in the middle is evident.
The proof that $g$ extends to a simplicial map $\og$ is analogous to the $\oh$ case.

To show that the simplicial maps $\og\circ\iota_\nu\circ\oh$ and $\iota$ are contiguous, we take an arbitrary $k$-simplex $\sigma=[y_0,\ldots,y_k]\in\vr{Y}{\varepsilon}$, meaning that $d_Y(y_i,y_j) < \varepsilon$ for all $i,j$.
Note that $\iota(\sigma)\in \vr{Y}{2r+\varepsilon+\nu}$ and $(\og\circ\iota_\nu\circ\oh)(\sigma) \in\vr{Y}{2r+\varepsilon+\nu}$.
For any $i,j$, we also have
\begin{align*}
d_Y(y_i,(g\circ h)(y_j))
&\leq d_X(h(y_i),h(y_j))+r && \text{ since }(h(y_i),y_i),(h(y_j),g(h(y_j)))\in C \\
&\leq d_Y(y_i,y_j)+ 2r && \text{ since }(h(y_i),y_i),(h(y_j),y_j)\in C  \\
& < 2r+\varepsilon \leq 2r+\varepsilon+\nu.
\end{align*}
As a result, the simplex 
\[
    \iota(\sigma)\cup(\og\circ\iota_\nu\circ\oh)(\sigma)=[y_0,\ldots,y_k,(g\circ h)(y_0),\ldots,(g\circ h)(y_k)]
\]
is a simplex in $\vr{Y}{2r+\varepsilon+\nu}$.
Therefore, the maps are contiguous.
\end{proof}

\begin{proof}[Proof of Lemma~\ref{lem:rips-hausdorff}]
Let $Z$ be a metric space, and let $Y\subseteq Z$ satisfy $r > 2\cdot d_\h(Y,Z)$.
We must show that for any $\varepsilon > 0$ and $\nu \ge 0$, there exist simplicial maps
\[\vr{Z}{\varepsilon} \xlongrightarrow{\of} \vr{Y}{r+\varepsilon} \xhookrightarrow{\iota_\nu} \vr{Y}{r+\varepsilon+\nu}
\xhookrightarrow{\iota} \vr{Z}{r+\varepsilon+\nu}\]
such that the composition $\iota \circ \iota_\nu \circ \of$ is contiguous to the inclusion $\iota'\colon \vr{Z}{\varepsilon}\hookrightarrow\vr{Z}{r+\varepsilon+\nu}$.

Since $d_\h(Y,Z)<\frac{r}{2}$, we can define a (possibly discontinuous) function $f\colon Z\to Y$ such that $d_Z(z,f(z))<\frac{r}{2}$ for any $z\in Z$.

We first show that the map $f$ extends to a simplicial map
$\of\colon \vr{Z}{\varepsilon}\to \vr{Y}{r+\varepsilon}$.
For $\sigma=[z_0,\ldots,z_k]\in\vr{Z}{\varepsilon}$, we have
\[
d_Z(f(z_i), f(z_j))\leq d_Z(f(z_i),z_i)+d_Z(z_i,z_j)+d_Z(z_j,f(z_j)) < \tfrac{r}{2}+\varepsilon+\tfrac{r}{2}=r+\varepsilon.
\]
So, the diameter of $\of(\sigma)=[f(z_0),f(z_1),\ldots,f(z_k)]$ is smaller than $r+\varepsilon$, giving $\of(\sigma)\in\vr{Y}{r+\varepsilon}$.
Therefore, $\of$ is a simplicial map.

In order to show that the simplicial maps $(\iota \circ \iota_\nu \circ \of)$ and $\iota'\colon \vr{Z}{\varepsilon}\hookrightarrow\vr{Z}{r+\varepsilon+\nu}$ are contiguous, we take an arbitrary simplex $\sigma=[z_0,\ldots,z_k]\in\vr{Z}{\varepsilon}$, i.e., $d_Z(z_i,z_j) < \varepsilon$ for all $i,j$.
Since $\of$ is a simplicial map, we have $d_Z(f(z_i),f(z_j)) < r+\varepsilon$.
We also note that for any $i,j$,
\[
d_Z(z_i,f(z_j))
\leq d_Z(z_i,z_j)+d_Z(z_j,f(z_j)) 
< \varepsilon + \tfrac{r}{2} < \varepsilon+r.
\]
As a result, the simplex 
$\iota'(\sigma)\cup (\iota\circ\iota_\nu\circ\of)(\sigma)=[z_0,z_1,\ldots,z_k,f(z_0),f(z_1),\ldots,f(z_k)] $
is in $\vr{Z}{r+\varepsilon+\nu}$.
Therefore, the maps are contiguous.
\end{proof}

\section{Additional proofs}
\label{app:additional}

As described at the end of Section~\ref{sec:circle}, we give an additional proof of Theorem~\ref{thm:main2}(a) using winding fractions.

\begin{proof}[Proof of Theorem~\ref{thm:main2}(a) using winding fractions]

We will show if $d_\gh(X,S^1)<\frac{\pi}{6}$, then $d_\gh(X,S^1)\geq d_\h(X,S^1)$.

Let $2\cdot d_\gh(X,S^1) < r < \frac{\pi}{3}$, and fix $\varepsilon>0$ so that $2r+\varepsilon < \frac{2\pi}{3}$.
By Lemma~\ref{lem:correspondence-vr} we get continuous maps
\[\vr{S^1}{\varepsilon} \xlongrightarrow{\oh} \vr{X}{r+\varepsilon} \xlongrightarrow{\og} \vr{S^1}{2r+\varepsilon}\]
whose composition $\og \circ \oh$ is homotopy equivalent to the inclusion $\iota \colon \vr{S^1}{\varepsilon} \hookrightarrow \vr{S^1}{2r+\varepsilon}$.
Since $2r+\varepsilon<\frac{2\pi}{3}$, it follows that $\vr{S^1}{2r+\varepsilon}\simeq S^1$, that $\vr{S^1}{\varepsilon}\simeq S^1$, and that the inclusion map $\iota$ is a homotopy equivalence~\cite{AA-VRS1}.
Therefore $\og \circ \oh$ is also a homotopy equivalence between spaces homotopy equivalent to circles, and hence nonzero on the fundamental group $H_1$.

If we had $r+\varepsilon < 2\cdot d_\h(X,S^1)$, then there would be an arc of the circle of length longer than $r+\varepsilon$ containing no points from $X$.
The theory of winding fractions (see~\cite{AA-VRS1,AAM})
would then give that the winding fraction $\textrm{wf}(\vr{X}{r+\varepsilon})$ is equal to zero.
By~\cite[Corollary~4.5]{AA-VRS1} or~\cite[Proposition~6.4]{AAM} in the case $X$ is finite, and more generally by~\cite[Proposition~7.5]{AA-VRS1} for $X$ arbitrary, this implies that $\vr{X}{r+\varepsilon}$ is homotopy equivalent to a single point or to a wedge sum of $0$-spheres, and hence that $H_1(\vr{X}{r+\varepsilon})$ is the trivial group.
This would contradict the fact that $\og \circ \oh$ is nonzero on $H_1$.
Therefore, it must be that $r+\varepsilon \ge 2\cdot d_\h(X,S^1)$ for all $2\cdot d_\gh(X,S^1) < r < \frac{\pi}{3}$ and sufficiently small $\varepsilon>0$.
Thus $2\cdot d_\gh(X,S^1) \ge 2\cdot d_\h(X,S^1)$ and $d_\gh(X,S^1) \ge d_\h(X,S^1)$, as desired.

\end{proof}

We also prove Lemma~\ref{lem:circum} here.

\begin{proof}[Proof of Lemma~\ref{lem:circum}]
Let $A\subseteq M$ be compact, with $\diam(A)< \tau$, where $\tau=\rho(M)$ for $\kappa\le 0$ and $\tau=\min\left\{\rho(M),\tfrac{\pi}{2\sqrt{\kappa+1}}\right\}$ for $\kappa>0$.
We must show that the circumcenter $c(A)$ exists in $M$, and that the diameter and circumradius satisfy $\diam(A)\geq2\alpha(n,\kappa)\cdot R(A)$.

Since $\tfrac{\pi}{2\sqrt{\kappa+1}}<\tfrac{2\pi}{3\sqrt{\kappa}}$, the existence of a circumcenter $c(A)$ follows immediately from Theorem \ref{thm:Jung}. For the diameter bound, we now consider the following two cases:

\noindent\textbf{Case I ($\pmb{\kappa\leq 0}$).}
This follows from Theorem~\ref{thm:Jung}, since $\kappa$ is an upper bound on curvature (if the sectional curvatures are upper bounded by a negative number, then they are also upper bounded by zero).

\noindent\textbf{Case II ($\pmb{\kappa>0}$).}
From the definition of
circumradius, we observe that $R(A)\leq\diam(A)$. 

From Theorem~\ref{thm:Jung}, we have 
\begin{align*}
\diam(A)&\geq\tfrac{2}{\sqrt{\kappa}}\sin^{-1}\left(\sqrt{\tfrac{n+1}{2n}}
\sin\left(\sqrt{\kappa}\;R(A)\right)\right) \\
&\geq\tfrac{2}{\sqrt{\kappa}}\sqrt{\tfrac{n+1}{2n}}\ 
\sin\left(\sqrt{\kappa}\;R(A)\right)&&\text{ since }\sin{x}\leq x \\
&=2\sqrt{\tfrac{n+1}{2n}}\ 
\frac{\sin\left(\sqrt{\kappa}\;R(A)\right)}{\sqrt{\kappa}R(A)}R(A).
\end{align*}
Since $\frac{\sin{x}}{x}$ is a decreasing function for $x\in[0,\pi/2]$, the
minimum occurs at the right endpoint of the domain of $x$.
Since $
R(A)\in\left[0,\tfrac{\pi}{2\sqrt{\kappa+1}}\right]$, we get
\[
\frac{\sin\left(\sqrt{\kappa}\;R(A)\right)}{\sqrt{\kappa}R(A)}
\geq\frac{\sin\left(\pi\sqrt{\kappa}/2\sqrt{\kappa+1}\right)}{\pi\sqrt{\kappa}/2\sqrt{\kappa+1}}
=\sin\left(\tfrac{\pi}{2}\sqrt{\tfrac{\kappa}{\kappa+1}}\right)/\left(\tfrac{\pi}{2}\sqrt{\tfrac{\kappa}{\kappa+1}}\right).
\]
Therefore,
\[
\diam(A)\geq2\sqrt{\tfrac{n+1}{2n}}\cdot\left(\sin\left(\tfrac{\pi}{2}\sqrt{\tfrac{\kappa}{\kappa+1}}\right)/\left(\tfrac{\pi}{2}\sqrt{\tfrac{\kappa}{\kappa+1}}\right)\right)\cdot R(A)
=2\alpha(n,\kappa)\cdot R(A).
\]
\end{proof}

\end{document}